\newtheorem{theorem}{Theorem}[section]
\newtheorem{lemma}{Lemma}[section]
\newtheorem{case}{Case}[section]
\newtheorem{subcase}{Subcase}[case]
\newtheorem{claim}{Claim}[section]
\begin{document}
\textwidth 150mm \textheight 225mm
\title{The generalized Tur\'{a}n number of spanning linear forests \thanks{Supported by the National Natural Science Foundation of
China (No. 11871398) and the Seed Foundation of Innovation and Creation for Graduate Students in Northwestern Polytechnical University (No. CX2020190).}}
\author{{Lin-Peng Zhang\textsuperscript{a,b}, Ligong Wang\textsuperscript{a,b,}\footnote{Corresponding author.} and Jiale Zhou\textsuperscript{a}}\\
{\small \textsuperscript{a} School of Mathematics and Statistics}\\
{\small Northwestern Polytechnical University, Xi'an, Shaanxi 710129, P.R. China.}\\
{\small \textsuperscript{b} Xi'an-Budapest Joint Research Center for Combinatorics}\\
{\small  Northwestern Polytechnical University, Xi'an, Shaanxi 710129, P.R. China.}\\
{\small E-mail: lpzhangmath@163.com, lgwangmath@163.com, zjl0508math@mail.nwpu.edu.cn}}
\date{}
\maketitle
\begin{center}
\begin{minipage}{135mm}
\vskip 0.3cm
\begin{center}
{\small {\bf Abstract}}
\end{center}
{\small Let $\mathcal{F}$ be a family of graphs. A graph $G$ is called \textit{$\mathcal{F}$-free} if for any $F\in \mathcal{F}$, there is no subgraph of $G$ isomorphic to $F$.
Given a graph $T$ and a family of graphs $\mathcal{F}$, the generalized Tur\'{a}n number of $\mathcal{F}$ is the maximum number of copies of $T$ in an $\mathcal{F}$-free graph
on $n$ vertices, denoted by $ex(n,T,\mathcal{F})$. A \textit{linear forest} is a graph whose connected components are all paths or isolated vertices.
Let $\mathcal{L}_{n,k}$ be the family of all linear forests of order $n$ with $k$ edges and $K^*_{s,t}$ a graph obtained from $K_{s,t}$ by substituting the part of size $s$ with a clique of the same size.
In this paper, we determine the exact values of $ex(n,K_s,\mathcal{L}_{n,k})$
and $ex(n,K^*_{s,t},\mathcal{L}_{n,k})$.
Also, we study the case of this problem when the \textit{``host graph''} is bipartite. Denote by $ex_{bip}(n,T,\mathcal{F})$ the maximum possible number of copies of $T$ in an $\mathcal{F}$-free
bipartite graph with each part of size $n$. We determine the exact value of $ex_{bip}(n,K_{s,t},\mathcal{L}_{n,k})$.
Our proof is mainly based on the shifting method.
\vskip 0.1in \noindent {\bf Key Words}: \ the shifting method; generalized Tur\'{a}n number; linear forest \vskip
0.1in \noindent {\bf AMS Subject Classification (2010)}: \ 05C05, 05C35}
\end{minipage}
\end{center}

\section{Introduction}
\quad\quad Let $\mathcal{F}$ be a family of graphs. A graph $G$ is called \textit{$\mathcal{F}$-free} if for any $F\in \mathcal{F}$, there is no subgraph of $G$ isomorphic to $F$.
Given a graph $T$ and a family of graphs $\mathcal{F}$, the \textit{generalized Tur\'{a}n number} of $\mathcal{F}$ is the maximum number of copies of $T$ in an $\mathcal{F}$-free graph
on $n$ vertices, denoted by $ex(n,T,\mathcal{F})$. When $T=K_2$, it reduces to the classical \textit{Tur\'{a}n number} $ex(n,\mathcal{F})$. When $\mathcal{F}$ contains only one simple graph $F$, we write
$ex(n,T,F)$ instead of $ex(n,T,\{F\})$. In \cite{zy}, Zykov determined the exact value of $ex(n,K_s,K_t)$. Let $P_k$ be the path on $k$ vertices and $\mathcal{C}_{\ge k}$ the family of all cycles with length at least $k$.
In \cite{Luo}, Luo determined the upper bounds on $ex(n,K_s,P_k)$ and $ex(n,K_s,\mathcal{C}_{\ge k})$. The two results generalized the Erd\H{o}s-Gallai's Theorem on paths and cycles \cite{E-G}. Recently, the problem
to estimate generalized Tur\'{a}n number has received a lot of attention, refer to \cite{Alon,G1,G2,G3,G4,G5,G6,G7,M-Q,N-P}.

A \textit{matching} in a graph $G$ is a subset of the edge set of $G$ consisting of pairwise disjoint edges. Denote by $M_{k}$ a matching containing $k$ edges.
For an integer $s$, we denote by $K_s$ and $E_s$ the complete graph on $s$ vertices
and the empty graph on $s$ vertices, respectively. The \textit{join} of two disjoint graphs $H_1$ and $H_2$, denoted by $H_1\vee H_2$, is the graph whose vertex set is $V(H_1\vee H_2)=V(H_1)\cup V(H_2)$
and edge set is $E(H_1\vee H_2)=E(H_1)\cup E(H_2)\cup \{xy:x\in V(H_1), y\in V(H_2)\}$. In \cite{E-G}, Erd\H{o}s and Gallai determined the exact value of $ex(n, M_{k+1})$. For the lower bound, the constructions $K_{2k+1}$ and
$K_k\vee E_{n-k}$ are $M_{k+1}$-free graphs with the required number of edges.
\begin{theorem}[\cite{E-G}]\label{ch1:E-G}
For any $n\ge 2k+1$, we have
$$
ex(n,M_{k+1})=max\left\{\dbinom{2k+1}{2}, \dbinom{k}{2}+k(n-k)\right\}.
$$
\end{theorem}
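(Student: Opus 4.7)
The plan is to establish the two bounds separately. For the lower bound, I would exhibit two $M_{k+1}$-free graphs attaining the two terms in the maximum. The graph consisting of $K_{2k+1}$ together with $n-2k-1$ isolated vertices contributes only $2k+1$ vertices of positive degree, so it cannot contain a $(k+1)$-matching, and its edge count is $\binom{2k+1}{2}$. The graph $K_k\vee E_{n-k}$ has the $k$ clique vertices as a vertex cover, forcing $\nu\le k$, and contributes $\binom{k}{2}+k(n-k)$ edges. Taking the larger of the two gives the claimed lower bound.

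For the upper bound I would proceed by induction on $k$, the case $k=0$ being trivial. Let $G$ be an extremal $M_{k+1}$-free graph on $n\ge 2k+1$ vertices. If $\nu(G)<k$, the inductive hypothesis applied with parameter $k-1$ gives $e(G)\le\max\{\binom{2k-1}{2},\binom{k-1}{2}+(k-1)(n-k+1)\}$, which for $n\ge 2k+1$ is strictly smaller than either candidate extremum and hence contradicts extremality; so $\nu(G)=k$. Fix a maximum matching $M=\{x_iy_i:1\le i\le k\}$, set $U=V(M)$ and $W=V(G)\setminus U$. Since $M$ is maximum, $W$ is independent, and eliminating length-$3$ augmenting paths forces, for every $i$, at most one of $N(x_i)\cap W$, $N(y_i)\cap W$ to be nonempty. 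The remaining work is to convert these local conditions into a global edge bound matching the claimed maximum.

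The main obstacle is turning the local restrictions imposed by the maximality of $M$ into a tight global count, because alternating paths of length $5$, $7$, and beyond can still augment $M$ even after length-$3$ augmentations have been ruled out. The standard remedy is a rotation/exchange argument: one successively modifies $M$ along alternating paths to arrive at a maximum matching together with a distinguished subset $S\subseteq U$ of size at most $k$ that picks one endpoint of each matching edge and covers every edge of $G$ incident to $W$. The global edge count then becomes a function of $|S|$ that is convex, and hence maximized at the extremes $|S|=0$ and $|S|=k$. These two extremes correspond precisely to the configurations $K_{2k+1}$ (with isolated vertices) and $K_k\vee E_{n-k}$, so comparing them yields the theorem.
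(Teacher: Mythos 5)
The paper itself does not prove this statement; it is the classical Erd\H{o}s--Gallai matching theorem quoted from \cite{E-G}, and the paper only records the lower-bound constructions $K_{2k+1}$ and $K_k\vee E_{n-k}$, which you reproduce correctly. So your attempt must be judged as a self-contained proof of the upper bound, and there it has a genuine gap. Already the local statement is inaccurate: excluding length-$3$ augmenting paths does not force that at most one of $N(x_i)\cap W$, $N(y_i)\cap W$ is nonempty, since $x_i$ and $y_i$ may both be adjacent to one and the same vertex $w\in W$ (a triangle $x_iy_iw$ with $M=\{x_iy_i\}$ maximum shows this). Much more seriously, the step you delegate to a ``standard rotation/exchange argument'' asserts a false structural claim: in a general (non-bipartite) graph there need not exist a maximum matching $M$ together with a set $S\subseteq V(M)$ containing at most one endpoint of each matching edge that covers all edges between $V(M)$ and $W$. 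The extremal graph $K_{2k+1}$ (plus isolated vertices) is itself a counterexample: the unmatched clique vertex lies in $W$ and is joined to both endpoints of every matching edge, so any such $S$ would have to contain both endpoints of each matching edge. This is precisely the K\"onig-type covering property that fails outside the bipartite setting; the obstruction is the factor-critical components in the Gallai--Edmonds structure, and any correct proof must confront it --- for instance via the Berge--Tutte deficiency formula (pick $S$ with $o(G-S)\ge n-2k+|S|$, bound $e(G)\le\binom{|S|}{2}+|S|(n-|S|)+\sum_i\binom{n_i}{2}$ over the odd components, then optimize), via Gallai--Edmonds directly, or via the shifting method as in \cite{wang}.

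A symptom of the problem is your own closing remark: if such an $S$ existed, the extreme $|S|=0$ would force all edges inside $V(M)$ and give only $\binom{2k}{2}$, so $K_{2k+1}$ is not ``the configuration corresponding to $|S|=0$'' --- it simply does not fit the claimed structure at all. Moreover, even granting $S$, ``the global edge count is a convex function of $|S|$'' is not yet a count: the edges inside $V(M)\setminus S$ and between $S$ and $V(M)\setminus S$ are not determined by $|S|$ alone, so there is no explicit function to which convexity could be applied. The lower-bound constructions and the induction-on-$k$ reduction to $\nu(G)=k$ are fine, but the heart of the theorem --- converting $\nu(G)\le k$ into the stated edge bound --- is still missing.
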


In \cite{wang}, Wang determined the exact value of $ex(n,K_s,M_{k+1})$ by using the \textit{shifting method}, which generalized Theorem \ref{ch1:E-G}.
\begin{theorem}[\cite{wang}]\label{ch1:th1}
For any $s\ge 2$ and $n\ge 2k+1$, we have
$$
ex(n,K_s,M_{k+1})=max\left\{\dbinom{2k+1}{s},\dbinom{k}{s}+(n-k)\dbinom{k}{s-1}\right\}.
$$
\end{theorem}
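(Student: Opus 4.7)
The plan is based on the shifting method, as indicated in the introduction. The lower bound follows from the two extremal constructions: $K_{2k+1}$ together with $n-2k-1$ isolated vertices is $M_{k+1}$-free and contains $\binom{2k+1}{s}$ copies of $K_s$, while $K_k\vee E_{n-k}$ has a vertex cover of size $k$ (hence is $M_{k+1}$-free) and contains $\binom{k}{s}+(n-k)\binom{k}{s-1}$ copies of $K_s$, since every $K_s$ in it uses at most one vertex of the $E_{n-k}$ part.

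For the upper bound, let $G$ be a $K_s$-maximizing $M_{k+1}$-free graph on $n$ vertices and fix an ordering $v_1,\dots,v_n$. For $i<j$, define the shift $s_{ij}$: for each $\ell\notin\{i,j\}$ with $v_jv_\ell\in E(G)$ and $v_iv_\ell\notin E(G)$, delete $v_jv_\ell$ and add $v_iv_\ell$. Two lemmas drive the argument. \emph{(i) Shifting preserves $M_{k+1}$-freeness}: any matching of size $k+1$ in $s_{ij}(G)$ can be pulled back to one in $G$ by reinstating each new edge $v_iv_\ell$ as $v_jv_\ell$, swapping the roles of $v_i$ and $v_j$ on a conflicting matching edge if necessary. \emph{(ii) Shifting does not decrease the number of $K_s$}: cliques avoiding $\{v_i,v_j\}$ or containing both $v_i$ and $v_j$ are preserved, and writing $X=N_G(v_i)\setminus(N_G(v_j)\cup\{v_j\})$, $Y=N_G(v_j)\setminus(N_G(v_i)\cup\{v_i\})$, $Z=N_G(v_i)\cap N_G(v_j)\setminus\{v_i,v_j\}$, the change in cliques meeting $\{v_i,v_j\}$ in exactly one vertex equals
\[
\bigl(\#K_{s-1}(G[X\cup Y\cup Z])-\#K_{s-1}(G[X\cup Z])\bigr)-\bigl(\#K_{s-1}(G[Y\cup Z])-\#K_{s-1}(G[Z])\bigr),
\]
which is non-negative because both parenthesised differences count $K_{s-1}$'s meeting $Y$ in the respective induced subgraph, and $G[Y\cup Z]$ is an induced subgraph of $G[X\cup Y\cup Z]$.

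Iterating the shifts, we may assume $G$ is shifted, so that the neighborhood of each $v_\ell$ is an initial segment of the ordering with $v_\ell$ removed. We conclude by induction on $n$. The base case $n=2k+1$ is immediate. For $n\ge 2k+2$, I first claim that $\deg(v_n)\le k$: otherwise $v_n$ is adjacent to $v_1,\dots,v_{k+1}$, and by the shifted property each $v_j$ with $j\le k+1$ is adjacent to every other vertex (the initial-segment neighborhood of $v_j$ has to reach $v_n$), so $\{v_iv_{k+1+i}:1\le i\le k+1\}$ is a matching of size $k+1$, contradicting $M_{k+1}$-freeness. Since $G-v_n$ is shifted and $M_{k+1}$-free on $n-1$ vertices,
\[
\#K_s(G)\le\#K_s(G-v_n)+\binom{\deg(v_n)}{s-1}\le ex(n-1,K_s,M_{k+1})+\binom{k}{s-1},
\]
which closes the induction whenever $\binom{k}{s}+(n-k)\binom{k}{s-1}$ is the binding term in the maximum.

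The principal obstacle is the boundary regime in which $\binom{2k+1}{s}$ is the binding term: here the naive induction loses an additive $\binom{k}{s-1}$, so one must rule out shifted $M_{k+1}$-free graphs whose $K_s$-count exceeds $\binom{2k+1}{s}$ because of edges leaving $\{v_1,\dots,v_{2k+1}\}$. I expect this to be handled by a finer structural analysis of shifts, mirroring the refinement used in the classical Erd\H{o}s--Gallai proof, which uses iterated shifting together with a greedy matching construction to show that the largest index appearing in an edge of $G$ cannot exceed $2k+1$ in this regime.
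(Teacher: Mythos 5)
Your lower bound and both shifting lemmas are fine, and up to that point you are following the same route as Wang's proof (and as this paper's proofs of Theorems \ref{ch1:th5} and \ref{ch1:th6}): shift to a shifted $M_{k+1}$-free graph without losing copies of $K_s$. The problem is the way you try to finish. Your induction on $n$ via the bound $\mathcal{N}(G,K_s)\le \mathcal{N}(G-v_n,K_s)+\binom{\deg(v_n)}{s-1}$ with $\deg(v_n)\le k$ only closes when the term $\binom{k}{s}+(n-k)\binom{k}{s-1}$ is the binding one at level $n-1$; in the regime where $\binom{2k+1}{s}$ is binding you lose an additive $\binom{k}{s-1}$, as you yourself note. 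That remaining case is not a boundary technicality to be waved at: it is exactly the case that distinguishes the two extremal constructions, and your proposed fix --- that a shifted extremal graph in this regime has no edge leaving $[2k+1]$ --- is precisely the statement that needs proof. It does not follow from shiftedness: $K_k\vee E_{n-k}$ and, more importantly, the whole family of intermediate shifted $M_{k+1}$-free graphs $K_{i-1}\vee\bigl(K_{2k+3-2i}\cup E_{n-2k-2+i}\bigr)$, $1\le i\le k+1$, have edges incident to vertices of arbitrarily large index, and a priori any of them could beat $\binom{2k+1}{s}$. An appeal to the Erd\H{o}s--Gallai edge argument does not transfer, since here one must compare numbers of $s$-cliques, not edges.

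The missing ingredient is the structural step that the paper uses in Section \ref{se3} for $\mathcal{L}_{n,k}$ (Lemma \ref{ch3:lem1}) and that Wang uses for matchings: for a shifted graph, $\nu(G)\ge k+1$ if and only if all the edges $\{i,2k+3-i\}$, $1\le i\le k+1$, are present, so a shifted $M_{k+1}$-free (edge-maximal) graph is a subgraph of $K_{i-1}\vee\bigl(K_{2k+3-2i}\cup E_{n-2k-2+i}\bigr)$ for some $i$. One then counts copies of $K_s$ in these graphs as a function of $i$ and uses convexity to conclude that the maximum is attained at the endpoints $i=1$ (giving $\binom{2k+1}{s}$) and $i=k+1$ (giving $\binom{k}{s}+(n-k)\binom{k}{s-1}$). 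Replacing your induction by this characterization-plus-convexity step would complete the proof; as written, the argument is incomplete in the main case.
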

Let $K^*_{s,t}$ be a graph obtained from $K_{s,t}$ by substituting the part of size $s$
with a clique of the same size. Wang \cite{wang} also determined the exact value of $ex(n,K^*_{s,t}, M_{k+1})$.
\begin{theorem}[\cite{wang}]\label{ch1:th2}
For any $s\ge 1$, $t\ge 2$ and $n\ge 2k+1$, we have
\begin{align}
\nonumber ex(n,K^*_{s,t}, M_{k+1})=&max\left\{{\dbinom{2k+1}{s+t}\dbinom{s+t}{t},\dbinom{k}{s}\dbinom{n-s}{t}}\right.\\
\nonumber &\left.{+(n-k)\dbinom{k}{s+t-1}\dbinom{s+t-1}{t}}\right\}.
\end{align}
\end{theorem}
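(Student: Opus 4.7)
The plan is to follow the shifting-method proof of Theorem \ref{ch1:th1} used by Wang \cite{wang}, adapted to the new target graph $T=K^*_{s,t}$; the argument splits into an explicit lower bound and a matching upper bound obtained by compressing any extremal graph to a canonical form.

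For the lower bound, both $K_{2k+1}$ (together with $n-2k-1$ isolated vertices) and $K_k\vee E_{n-k}$ are $M_{k+1}$-free, and each realizes one of the two terms of the $\max$. In $K_{2k+1}$, any $s+t$ vertices span a $K_{s+t}$, and designating which $t$ of them play the role of the independent side gives $\binom{2k+1}{s+t}\binom{s+t}{t}$ copies. For $K_k\vee E_{n-k}$ with clique part $A$ of size $k$ and independent part $B$ of size $n-k$, a copy of $K^*_{s,t}$ either has its clique side entirely in $A$, in which case the independent side may be any $t$ of the remaining $n-s$ vertices, contributing $\binom{k}{s}\binom{n-s}{t}$; or it has exactly one vertex of $B$ on the clique side, forcing the independent side into the $k-s+1$ remaining vertices of $A$ and contributing $(n-k)\binom{k}{s-1}\binom{k-s+1}{t}=(n-k)\binom{k}{s+t-1}\binom{s+t-1}{t}$.

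For the upper bound, let $G$ be an $M_{k+1}$-free graph on $[n]=\{1,\dots,n\}$ maximizing the number of copies of $K^*_{s,t}$. For each $i<j$ I would apply the standard shift $S_{ij}$: for every $\ell\notin\{i,j\}$ with $j\ell\in E(G)$ and $i\ell\notin E(G)$, replace the edge $j\ell$ by $i\ell$. The two properties to verify are (i) $S_{ij}(G)$ remains $M_{k+1}$-free, which follows from the usual back-substitution argument that turns any $(k+1)$-matching in $S_{ij}(G)$ into one of the same size in $G$; and (ii) the number of copies of $K^*_{s,t}$ does not decrease under $S_{ij}$. The latter is the main obstacle: one needs an injection from $K^*_{s,t}$-copies of $G$ using a removed edge $j\ell$ into copies that survive or are created in $S_{ij}(G)$, with case analysis according to whether $j\ell$ lies inside the clique side or between the two sides, and according to whether the vertex $i$ is already part of the copy.

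Iterating the shifts yields a shifted graph $G^{\ast}$ whose edge set is downward-closed under coordinatewise comparison on $[n]^2$. The Erd\H{o}s--Gallai structural analysis for shifted $M_{k+1}$-free graphs (as in the proof of Theorem \ref{ch1:E-G}) then shows that either all edges of $G^{\ast}$ lie in $\{1,\dots,2k+1\}$, in which case maximality gives $G^{\ast}=K_{2k+1}$ together with isolated vertices, or $\{k+1,\dots,n\}$ is independent in $G^{\ast}$ and fully joined to $\{1,\dots,k\}$, giving $G^{\ast}=K_k\vee E_{n-k}$. Comparing the two $K^*_{s,t}$-counts computed in the lower-bound step completes the proof.
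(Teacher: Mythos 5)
The paper itself only quotes this theorem from \cite{wang} without proof, so the benchmark is the shifting machinery it sets up in Sections \ref{se2}--\ref{se3}. Your lower-bound computation is correct, and of your two shifting ingredients, (i) is the standard back-substitution argument and (ii) is exactly Lemma \ref{ch2:lem2}, which you could simply cite; as written, however, you only announce the injection needed for (ii) and never construct it, so that step is asserted rather than proved.

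The genuine gap is the final structural step. For shifted $M_{k+1}$-free graphs the dichotomy you invoke --- either every edge lies in $[2k+1]$, or $\{k+1,\dots,n\}$ is independent and completely joined to $[k]$ --- is false, and the further conclusion that the shifted graph must be $K_{2k+1}\cup E_{n-2k-1}$ or $K_k\vee E_{n-k}$ is both unjustified and circular (which graphs are extremal is what the theorem asserts). Concretely, for $k\ge 2$ and $n\ge 2k+2$ take $G^*=K_1\vee\bigl(K_{2k-1}\cup E_{n-2k}\bigr)$ with the join vertex labelled $1$ and the clique labelled $\{2,\dots,2k\}$: this graph is shifted, edge-maximal $M_{k+1}$-free with $\nu(G^*)=k$, yet vertex $1$ sends edges beyond $[2k+1]$ and $\{k+1,\dots,n\}$ contains the edge $\{k+1,k+2\}$, so neither branch of your dichotomy applies; nothing in your argument rules out that iterated shifting of a copy-maximizer lands on such an intermediate graph. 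What the shifting method actually delivers is the one-parameter family interpolating between the two constructions: every shifted $M_{k+1}$-free graph is a subgraph of $K_m\vee\bigl(K_{2(k-m)+1}\cup E_{n-2k+m-1}\bigr)$ for some $0\le m\le k$, and one must then count the copies of $K^*_{s,t}$ in each member and verify that this count is a convex function of $m$, so that the maximum over the family is attained at the endpoints $m=0$ (giving $\binom{2k+1}{s+t}\binom{s+t}{t}$) or $m=k$ (giving the second term). This is precisely the analogue of Lemma \ref{ch3:lem1} together with the convexity of $f(m)$ used in the paper's proofs of Theorems \ref{ch1:th5} and \ref{ch1:th6}, and it is the step your outline skips.
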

For the lower bounds of $ex(n,K_s,M_{k+1})$ and $ex(n,K^*_{s,t},M_{k+1})$,
the constructions $K_{2k+1}$ and $K_k\vee E_{n-k}$ are $M_{k+1}$-free graphs with the required number of $s$-cliques and $K^*_{s,t}$.

A matching can also be viewed as a forest whose components are all paths with length one. A \textit{linear forest} is a graph whose connected components are all paths or isolated vertices.
Denote by $\mathcal{L}_{n,k}$ the family of all linear forests of order $n$ with $k$ edges. Recently, Ning and Wang \cite{N-W} determined the exact value of $ex(n, \mathcal{L}_{n,k})$.
\begin{theorem}[\cite{N-W}]\label{ch1:th4}
For any $1\le k\le n-1$,
$$
ex(n,\mathcal{L}_{n,k})=max\left\{\dbinom{k}{2},\dbinom{n}{2}-\dbinom{n-\left\lfloor\frac{k-1}{2}\right\rfloor}{2}+c\right\},
$$
where $c=0$ if $k$ is odd, and $c=1$ otherwise.
\end{theorem}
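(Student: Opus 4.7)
The plan follows the standard lower/upper split. Set $m=\lfloor(k-1)/2\rfloor$, with $c=0$ if $k$ is odd and $c=1$ if $k$ is even. For the lower bound I would exhibit two $\mathcal{L}_{n,k}$-free constructions realizing the two terms of the maximum. The first is $K_k\cup(n-k)K_1$: it has $\binom{k}{2}$ edges, and its longest spanning linear forest is a Hamilton path of the clique together with $n-k$ isolated vertices, giving only $k-1$ edges. The second is $K_m\vee E_{n-m}$, equipped with one additional edge inside the independent part $E_{n-m}$ when $k$ is even; this has $\binom{n}{2}-\binom{n-m}{2}+c$ edges, and since every edge (apart from the possible extra one) has at least one endpoint in the clique of size $m$, whose vertices have degree at most $2$ in any linear forest, any linear forest has at most $2m+c=k-1$ edges.

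For the upper bound I would argue by induction on $n$ with $k$ fixed. Let $G$ be an extremal $\mathcal{L}_{n,k}$-free graph. The key structural claim is that some vertex of $G$ has degree at most $m$. Granted this, delete such a vertex $v$: the resulting graph $G-v$ is $\mathcal{L}_{n-1,k}$-free (any linear forest of $G-v$ with $k$ edges lifts to one in $G$ by reattaching $v$ as an isolated vertex), at most $m$ edges are lost, and the inductive hypothesis $|E(G-v)|\le\binom{n-1}{2}-\binom{n-1-m}{2}+c$ telescopes to the required bound $\binom{n}{2}-\binom{n-m}{2}+c$, since the quantity $\binom{n}{2}-\binom{n-m}{2}$ grows by exactly $m$ when $n$ is incremented. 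The induction bottoms out at small $n$ (roughly $n\le 2m+1$), where $G$ is contained in a clique and a direct comparison with $\binom{k}{2}$ finishes the argument.

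The main obstacle is establishing the degree claim: if $\delta(G)\ge m+1=\lceil k/2\rceil$, then $G$ must contain a linear forest with $k$ edges, contradicting $\mathcal{L}_{n,k}$-freeness. I would prove this via P\'{o}sa-type rotations on a maximum linear forest $F$ of $G$. Supposing $|E(F)|\le k-1$, the maximality of $F$ forces each path-endpoint of $F$ to send all of its $\ge m+1$ neighbors into internal vertices of $F$, so each such edge is either a chord that allows a rotation producing a new endpoint, or a bridge to another path that permits a merge into a longer path. Iterating these rotations should exhaust the $\ge m+1$ available configurations before $F$ stabilizes, producing an extension of $F$ to $k$ edges. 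The parity correction $c$ enters precisely as the slack of this rotation argument: when $k$ is even the counting leaves one unit of room, matching the extra edge of the second construction, whereas the odd case is critically tight. Careful bookkeeping at the interface between the two regimes of the maximum, and at the base cases of the induction, will be required.
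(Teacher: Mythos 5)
First, note that the paper does not prove this statement itself: Theorem~\ref{ch1:th4} is quoted from Ning and Wang \cite{N-W}, whose proof uses the closure technique (via the stability property in Lemma~\ref{ch2:lem3}); it also follows from the present paper's shifting argument, since $ex(n,\mathcal{L}_{n,k})=ex(n,K_2,\mathcal{L}_{n,k})$ is the case $s=2$ of Theorem~\ref{ch1:th5}, where Lemma~\ref{ch3:lem1} reduces everything to the graphs $H(n,k,m)$ and a convexity comparison handles both terms of the maximum at once. Your lower-bound constructions are exactly the ones used in the paper and are correct.

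Your upper bound, however, has a genuine gap in the inductive step. Deleting a vertex $v$ of degree at most $m=\lfloor\frac{k-1}{2}\rfloor$ gives $e(G)\le e(G-v)+m$, and this telescopes only when the binding term of the inductive bound for $G-v$ is $\binom{n-1}{2}-\binom{n-1-m}{2}+c$. In the regime where the clique term is the maximum -- which is a nonempty range of $n$ just above $k$, not merely a single ``interface'' point -- the induction returns $\binom{k}{2}+m$, which exceeds the claimed bound $\binom{k}{2}$, and this loss of $m$ accumulates at every step of that regime. Concretely, for $k=6$, $n=8$ the theorem asserts $ex(8,\mathcal{L}_{8,6})=\max\{15,\,2\cdot 8-3+1\}=15$, but your step from $n=7$ (where the bound is also $15$) only yields $15+2=17$. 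So the vertex-deletion induction cannot close as described; one needs either a structural characterization of the extremal graphs (as in the closure or shifting approaches, which show $G\subseteq H(n,k,m)$ and then compare $f(k)$ with $f(\lceil\frac{k+1}{2}\rceil)$ by convexity) or a substantially strengthened induction hypothesis in the small-$n$ regime. A secondary, more repairable point: your P\'osa-rotation argument for the claim ``$\delta(G)\ge\lceil k/2\rceil$ forces a linear forest with $k$ edges'' is only sketched; the claim itself is true and is easier to get from the classical Erd\H{o}s--Gallai/Dirac fact that a connected graph with minimum degree $d$ contains a path with $\min(2d,\,|V|-1)$ edges, applied to one component if $G$ is connected and to two components (each of order at least $d+1$) otherwise; also, the parity term $c$ does not arise from any slack in such a rotation argument but only from the base case and the construction.
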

For the lower bound of Theorem \ref{ch1:th4}, the constructions $K_k$ and $K_{\frac{k-1}{2}}\vee E_{n-\frac{k-1}{2}}$ are $\mathcal{L}_{n,k}$-free graphs with the required number of edges for an odd $k$,
and the constructions $K_k$ and $K_{\frac{k}{2}-1}\vee (E_{n-\frac{k}{2}-1}\cup K_2)$ are $\mathcal{L}_{n,k}$-free graphs with the required number of edges for an even $k$.
They mainly used \textit{the closure operation} in the proof. They call this approach the \textit{closure technique for Tur\'{a}n problems}.

Motivated by the Tur\'{a}n number of $\mathcal{L}_{n,k}$ and the generalized Tur\'{a}n number of matchings, in this paper, we consider the generalized Tur\'{a}n number of $\mathcal{L}_{n,k}$ and determine the exact
values of $ex(n, K_s, \mathcal{L}_{n,k})$ and $ex(n, K^*_{s,t}, \mathcal{L}_{n,k})$.
\begin{theorem}\label{ch1:th5}
For any $s\ge 2$ and $n\ge k+1$,
$$
ex(n,K_s,\mathcal{L}_{n,k})=
max\left\{\dbinom{k}{s}, \dbinom{\left\lceil\frac{k+1}{2}\right\rceil}{s}+
\bigg(n-\left\lceil\frac{k+1}{2}\right\rceil\bigg)\dbinom{\left\lfloor\frac{k-1}{2}\right\rfloor}{s-1}\right\}.
$$
\end{theorem}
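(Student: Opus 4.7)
My plan is to carry out a shifting argument in the spirit of Wang~\cite{wang}'s treatment of $ex(n,K_s,M_{k+1})$, adapted to the linear forest setting. Fix a labeling $v_1,\ldots,v_n$ of $V(G)$, and for $1\le i<j\le n$ define the shift $s_{ij}(G)$: for each edge $v_jv_l$ with $l\notin\{i,j\}$, if $v_iv_l\notin E(G)$ replace $v_jv_l$ by $v_iv_l$, otherwise keep it. The two invariants I need are (I) if $G$ is $\mathcal{L}_{n,k}$-free then so is $s_{ij}(G)$, and (II) $s_{ij}(G)$ has at least as many copies of $K_s$ as $G$. For (I), given a spanning linear forest $F'\subseteq s_{ij}(G)$ with $k$ edges, I would ``unshift'' by converting each edge $v_iv_l\in F'$ that came from $v_jv_l\in E(G)$ back to $v_jv_l$, with a short case analysis on the degrees of $v_i$ and $v_j$ in $F'$ (and, when needed, a symmetric swap of the roles of $v_i$ and $v_j$) producing a spanning linear forest in $G$ with $k$ edges. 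For (II), the standard injection sends any $K_s\subseteq G$ containing $v_j$ but not $v_i$ to a $K_s\subseteq s_{ij}(G)$ by swapping $v_j$ with $v_i$, and this map is injective.

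Iterating shifts to a fixed point yields a shifted $\mathcal{L}_{n,k}$-free graph $G^*$ with nested neighborhoods and at least as many $K_s$'s as $G$. I then analyze the structure of $G^*$. Let $t$ be the largest index with $d_{G^*}(v_t)\ge 1$; shiftedness forces $v_1$ to be adjacent to each of $v_2,\ldots,v_t$. Let $r$ be the size of the maximal initial-segment clique $\{v_1,\ldots,v_r\}$ in $G^*$. Using the nested structure I can exhibit a linear forest built from an alternating path $v_{r+1}v_1v_{r+2}v_2\cdots v_{2r+1}v_r$ of length $2r$, plus any extra edge among $\{v_{r+1},\ldots,v_t\}$. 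The $\mathcal{L}_{n,k}$-free hypothesis then forces either $t\le k$, so $G^*\subseteq K_k\cup E_{n-k}$ and contains at most $\binom{k}{s}$ copies of $K_s$, or $2r\le k-1$, so $r\le\lfloor(k-1)/2\rfloor$ and (with a little extra work for the remaining pendants) $G^*\subseteq K_r\vee E_{n-r}$ for odd $k$, or $G^*\subseteq K_{k/2-1}\vee(E_{n-k/2-1}\cup K_2)$ for even $k$.

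The main obstacle will be the structural step for even $k$: I must verify that exactly one extra edge can sit inside the non-clique part without producing a $k$-edge linear forest, but a second such edge cannot. This requires analyzing how a $K_2$ in the ``empty'' part combines with the alternating path through the join. Once the structure of $G^*$ is pinned down, counting $K_s$'s is bookkeeping: partitioning each $K_s$ by its number of clique vertices and applying Pascal's identity $\binom{a+1}{s}=\binom{a}{s}+\binom{a}{s-1}$ yields the expression $\binom{\lceil(k+1)/2\rceil}{s}+(n-\lceil(k+1)/2\rceil)\binom{\lfloor(k-1)/2\rfloor}{s-1}$ in both parity cases. The matching lower bounds are provided by the explicit $\mathcal{L}_{n,k}$-free constructions noted after Theorem~\ref{ch1:th4} (namely $K_k$ plus isolated vertices, and the appropriate join for each parity), and taking the maximum of the two counts gives the claimed value of $ex(n,K_s,\mathcal{L}_{n,k})$.
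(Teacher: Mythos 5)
Your lower bound and your two shifting invariants (preservation of $\mathcal{L}_{n,k}$-freeness and monotonicity of the $K_s$-count) match the paper's Lemmas \ref{ch2:lem1} and \ref{ch2:lem2}. The gap is in your structural analysis of the shifted graph $G^*$. The dichotomy you assert --- either $t\le k$, so $G^*\subseteq K_k\cup E_{n-k}$, or $2r\le k-1$, so $G^*\subseteq K_{\lfloor (k-1)/2\rfloor}\vee E_{n-\lfloor(k-1)/2\rfloor}$ (or the even-$k$ variant with one extra edge) --- is false. A shifted, edge-rich, $\mathcal{L}_{n,k}$-free graph can simultaneously have a large initial clique and many non-isolated vertices outside it: for every $\lceil\frac{k+1}{2}\rceil\le m\le k$, the graph $H(n,k,m)$ (clique on $[m]$ plus all edges from $[n]\setminus[m]$ to $[k-m]$) is shifted and $\mathcal{L}_{n,k}$-free, yet for intermediate $m$ (say $m=k-1$, $k\ge 4$) it is contained in neither of your two target graphs. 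Your argument for forcing $2r\le k-1$ breaks precisely here: the alternating path $v_{r+1}v_1v_{r+2}v_2\cdots v_{2r+1}v_r$ requires $v_{r+i}$ to be adjacent to $v_{i-1}$ and $v_i$ for all $i\le r$, but in a shifted graph the vertices outside the clique may be adjacent only to a short initial segment (of length $k-m$) of the clique, so that path need not exist even though the clique is much larger than $\lfloor(k-1)/2\rfloor$.

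Because of this, your proof is missing the two ingredients the paper uses to close exactly this hole. First, the correct structural statement (Lemma \ref{ch3:lem1}) is not a two-case dichotomy but a one-parameter family: a shifted extremal graph is a subgraph of $H(n,k,m)$ for some $\lceil\frac{k+1}{2}\rceil\le m\le k$; proving the degree bound $d(v)\le k-m$ for vertices outside the clique needs the $k$-stability of $\mathcal{L}_{n,k}$-freeness (Lemma \ref{ch2:lem3}) applied to an edge-maximal graph, which your outline has no substitute for. Second, having the whole family, one must still beat the intermediate values of $m$; the paper does this by observing that
$$
f(m)=\binom{m}{s}+(n-m)\binom{k-m}{s-1}
$$
is convex in $m$, so its maximum over $\lceil\frac{k+1}{2}\rceil\le m\le k$ is attained at an endpoint, yielding exactly the two quantities in the theorem. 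Without either the correct structure lemma or this convexity step, your argument does not rule out the interpolating graphs $H(n,k,m)$, so the upper bound is not established. The final bookkeeping with Pascal's identity and the explicit constructions for the lower bound are fine.
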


\begin{theorem}\label{ch1:th6}
For any $s\ge 1$, $t\ge 2$ and $n\ge k+1$,
\begin{align}
\nonumber ex(n,K^*_{s,t},\mathcal{L}_{n,k})&=max\left\{{\dbinom{k}{s+t}\dbinom{s+t}{t}, \dbinom{\left\lfloor\frac{k-1}{2}\right\rfloor}{s}\dbinom{n-s}{t}}\right. \\
\nonumber &+\left.{\bigg(n-\left\lceil\frac{k+1}{2}\right\rceil\bigg)\dbinom{\left\lfloor\frac{k-1}{2}\right\rfloor}{s-1}\dbinom{\left\lfloor\frac{k-1}{2}\right\rfloor-s+1}{t}}\right.\\
\nonumber &\left.{+\bigg(\dbinom{\left\lceil\frac{k+1}{2}\right\rceil}{s}-\dbinom{
\left\lfloor\frac{k-1}{2}\right\rfloor}{s}\bigg)\dbinom{\left\lceil\frac{k+1}{2}\right\rceil-s}{t}}
\right\}.
\end{align}
\end{theorem}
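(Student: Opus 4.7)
The plan is to prove Theorem~\ref{ch1:th6} by the shifting method, paralleling Wang's proof of Theorem~\ref{ch1:th2} but replacing the constraint ``no $M_{k+1}$'' with ``no spanning linear forest with $k$ edges.'' For the lower bound I first verify the two extremal constructions that achieve the maximum. The graph $K_{k}\cup E_{n-k}$ is $\mathcal{L}_{n,k}$-free and contains $\binom{k}{s+t}\binom{s+t}{t}$ copies of $K^{*}_{s,t}$, one for every choice of $s+t$ vertices in the clique together with a split into the $s$-clique side and the $t$-side. For the second construction I take $H_{n,k}=K_{\lfloor(k-1)/2\rfloor}\vee E_{n-\lfloor(k-1)/2\rfloor}$ when $k$ is odd and $H_{n,k}=K_{k/2-1}\vee(E_{n-k/2-1}\cup K_{2})$ when $k$ is even, both already known to be $\mathcal{L}_{n,k}$-free. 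I then count copies of $K^{*}_{s,t}$ in $H_{n,k}$ by classifying the $s$-clique $A$ by its intersection with the clique block $C$, the empty block $I$, and (when $k$ is even) the extra $K_{2}$-block $D$, and for each $A$ computing the common neighborhood in $V\setminus A$ to place the $t$-part. The three summands in the second argument of the maximum correspond respectively to $A\subseteq C$ with the $t$-part ranging over $V\setminus A$, to $|A\cap I|=1$ with the $t$-part forced inside $C\setminus A$, and (in the even case) to the additional contributions from $A$ meeting $D$, which via Pascal's identity are exactly packaged by $\binom{\lceil(k+1)/2\rceil}{s}-\binom{\lfloor(k-1)/2\rfloor}{s}$ times $\binom{\lceil(k+1)/2\rceil-s}{t}$.

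For the upper bound I introduce the shifting operation $S_{ij}$ for $1\le i<j\le n$: for each edge $\{j,\ell\}$ with $\ell\neq i$ and $\{i,\ell\}\notin E(G)$, delete $\{j,\ell\}$ and add $\{i,\ell\}$. The two key claims are that $S_{ij}$ preserves $\mathcal{L}_{n,k}$-freeness and does not decrease the number of copies of $K^{*}_{s,t}$. For the first claim I argue contrapositively: if $S_{ij}(G)$ contains a linear forest $F$ with $k$ edges, then by swapping the roles of $i$ and $j$ along the at most two edges of $F$ incident to $i$ that arose from the shift, one recovers a linear forest of the same size in $G$; a short case analysis according to whether $i$ and $j$ share a path-component in $F$ verifies that the swap creates neither a cycle nor a vertex of degree three. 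For the second claim I define an injection from copies of $K^{*}_{s,t}$ in $G$ destroyed by $S_{ij}$ to new copies in $S_{ij}(G)$ obtained by replacing $j$ with $i$; this works because the edges added by the shift are precisely what is needed to transfer the clique-plus-common-neighborhood structure from $j$ to $i$.

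Iterating $S_{ij}$ over all pairs $i<j$ until stability produces a shifted graph $G^{*}$ that is still $\mathcal{L}_{n,k}$-free and whose $K^{*}_{s,t}$-count is at least that of $G$. The main step, and the expected principal obstacle, is a structural dichotomy for shifted $\mathcal{L}_{n,k}$-free graphs: every such $G^{*}$ is either contained in the clique on $\{1,\ldots,k\}$ or is a subgraph of $H_{n,k}$. For matching-free graphs the analogous dichotomy follows quickly from the disjointness of matching edges, but here one must simultaneously rule out long paths and unions of shorter paths summing to $k$ edges. I expect the proof to split on the parity of $k$: assuming an edge $\{i,j\}$ outside the appropriate core, one assembles a linear forest of size $k$ from lower-indexed edges of $G^{*}$ (which must exist by the shifted property) together with $\{i,j\}$, contradicting $\mathcal{L}_{n,k}$-freeness. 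The even case is the most delicate because the $K_{2}$-block in $H_{n,k}$ is allowed only because it does not attach to the independent block, and this constraint must be reflected in the structure theorem. Once the dichotomy is in hand, directly counting $K^{*}_{s,t}$ in each of the two possible structures matches the corresponding summand of the maximum, completing the proof.
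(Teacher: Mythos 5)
Your lower bound and the shifting preliminaries follow the paper's route (the paper's Lemmas \ref{ch2:lem1} and \ref{ch2:lem2}), but the structural step you rely on is not correct as stated, and this is exactly where the real work lies. You claim a dichotomy: every shifted $\mathcal{L}_{n,k}$-free graph is either contained in the clique on $[k]$ or is a subgraph of $H_{n,k}$ (the join construction). This is false. Consider the graph $H(n,k,m)$ on $[n]$ in which $A=[m]$ is a clique, $C=[k-m]\subseteq A$, and every vertex of $[n]\setminus A$ is joined to all of $C$, for an intermediate value $\lceil\frac{k+1}{2}\rceil< m< k$. For instance, with $k=7$ and $m=5$, any linear forest has at most $4$ edges meeting $\{1,2\}$ (each of these two vertices has degree at most $2$ in a linear forest) and at most $2$ edges inside $\{3,4,5\}$, hence at most $6=k-1$ edges, so $H(n,7,5)$ is $\mathcal{L}_{n,7}$-free; it is shifted under the natural labelling; yet it has more than $7$ non-isolated vertices (so it is not inside $K_7\cup E_{n-7}$) and clique number $5>4$ (so it is not a subgraph of $K_3\vee E_{n-3}$). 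So the structure theorem you plan to prove has counterexamples, and the argument you sketch for it (assembling a $k$-edge linear forest from lower-indexed edges plus one stray edge) cannot succeed.

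What the paper proves instead (Lemma \ref{ch3:lem1}) is the weaker, correct statement that a shifted graph containing a largest linear forest with $k-1$ edges is a subgraph of $H(n,k,m)$ for \emph{some} $m$ with $\lceil\frac{k+1}{2}\rceil\le m\le k$; its proof passes through an edge-maximal $\mathcal{L}_{n,k}$-free supergraph and the $k$-stability Lemma \ref{ch2:lem3}, ingredients absent from your outline. Because the conclusion is a one-parameter family rather than a dichotomy, the upper bound then requires an additional counting step that your proposal omits entirely: one must compute $\mathcal{N}(H(n,k,m),K^*_{s,t})=f_1(m)+f_2(m)+f_3(m)$ (splitting the copies according to whether the $s$-clique lies in $C$, meets $[n]\setminus A$, or meets $A\setminus C$), verify that this is a convex function of $m$, and conclude that the maximum over the admissible range of $m$ is attained at an endpoint $m=k$ or $m=\lceil\frac{k+1}{2}\rceil$, which yields the two terms in the theorem. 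Without the corrected structure lemma and this convexity argument, your upper bound does not follow.
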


In \cite{wang}, Wang also studied the bipartite case of the problem.
Denote by $ex_{bip}(n,T,F)$ the maximum possible number of copies of $T$ in a bipartite $F$-free graph with each part of equal size $n$. Wang \cite{wang} determined the exact value of $ex_{bip}(n, T, M_{k+1})$ for $T=K_{s,t}$.
\begin{theorem}[\cite{wang}]\label{ch1:th3}
For any $s,t\ge 2$ and $n\ge k$,
$$
ex_{bip}(n, K_{s,t}, M_{k+1})=
\begin{cases}
\dbinom{k}{s}\dbinom{n}{t}+\dbinom{k}{t}\dbinom{n}{s}, & s\neq t, \\
\dbinom{k}{s}\dbinom{n}{s}, & s=t.
\end{cases}
$$
\end{theorem}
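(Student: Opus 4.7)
My plan is to obtain both bounds using K\"onig's theorem rather than the shifting method employed in the rest of the paper. For the lower bound, I would use the bipartite graph on $A \cup B$ with $|A|=|B|=n$ in which a fixed $k$-subset $S \subseteq A$ is completely joined to $B$ and the remaining vertices of $A$ are isolated. Every edge meets $S$, so the maximum matching has size exactly $k$ and the graph is $M_{k+1}$-free. Counting copies of $K_{s,t}$ according to whether the $s$-side lies in $A$ or in $B$ yields $\binom{k}{s}\binom{n}{t} + \binom{k}{t}\binom{n}{s}$ when $s \ne t$, and a single term $\binom{k}{s}\binom{n}{s}$ when $s=t$.

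For the upper bound, given a bipartite $M_{k+1}$-free graph $G$ on $A \cup B$, K\"onig's theorem supplies a vertex cover $C_A \cup C_B$ with $a+b \le k$, where $a = |C_A|$ and $b = |C_B|$. The structural observation I would establish is that any copy of $K_{s,t}$ with parts $U \subseteq A$ and $V \subseteq B$ must satisfy $U \subseteq C_A$ or $V \subseteq C_B$, for otherwise a pair $(u_0, v_0) \in (U \setminus C_A) \times (V \setminus C_B)$ would furnish an edge uncovered by $C$. When $s \ne t$, writing $X_A$ and $X_B$ for the numbers of copies with the $s$-side in $A$ and in $B$ respectively, this bounds $X_A \le \binom{a}{s}\binom{n}{t} + \binom{n}{s}\binom{b}{t}$ and symmetrically for $X_B$. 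When $s=t$, every copy of $K_{s,s}$ has exactly one part in each of $A$ and $B$, so there is a single count $X$ satisfying $X \le \binom{a}{s}\binom{n}{s} + \binom{n}{s}\binom{b}{s}$.

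Summing (or not, in the $s=t$ case) and applying the elementary convexity inequality $\binom{a}{r} + \binom{b}{r} \le \binom{a+b}{r} \le \binom{k}{r}$, valid because $a+b \le k$, delivers the claimed bounds. The main obstacle, or at least the only place that requires thought, is handling the bookkeeping in the case $s=t$ so that the final expression has the correct single summand rather than being inadvertently doubled; everything else is a one-line count. If one prefers a proof fully within the paper's shifting framework, the same answer can be recovered by bishifting on both parts until the edge set becomes a staircase $\{(a_i, b_l) : l \le d_i\}$ with $d_1 \ge d_2 \ge \cdots \ge d_n$, at which point the absence of $M_{k+1}$ forces $d_i = 0$ for $i > k$ and a direct enumeration reproduces the bound.
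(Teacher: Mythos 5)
Your main argument is correct, but note that this paper never proves Theorem \ref{ch1:th3} at all: it is quoted from Wang \cite{wang}, where it is obtained by the shifting method, so there is no in-paper proof to compare against line by line. What you propose is essentially the strategy this paper uses in Section \ref{se4} for the harder bipartite $\mathcal{L}_{n,k}$ analogue (Theorem \ref{ch1:th7}): there, Lemma \ref{ch4:lem1} (via the K\"onig--Hall theorem, Theorem \ref{ch4:th1}) produces a vertex cover $T$, the graph is enlarged to $G^*$ in which each copy of $K_{s,t}$ must have one full side inside the cover, and the count is then optimized over the split $x=|X_1|$ of the cover between the two sides by convexity of $f_{s,t}(x)$. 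Your proof replaces that optimization with the superadditivity bound $\binom{a}{r}+\binom{b}{r}\le\binom{a+b}{r}\le\binom{k}{r}$ for $r\ge 1$, which works here because for $M_{k+1}$-freeness only $a+b\le k$ matters and any split is admissible (unlike the linear-forest case, where the feasible splits are constrained and the convexity endpoint analysis is genuinely needed); your lower-bound construction $K_{k,n}$ plus isolated vertices, and your handling of the $s=t$ case as a single count, are both right. The one flaw is the closing aside: after shifting both sides, $M_{k+1}$-freeness does \emph{not} force $d_i=0$ for $i>k$ in the staircase (the star $K_{1,n}$ is shifted, $M_{k+1}$-free for every $k\ge 1$, and has arbitrarily many vertices of positive degree), so that alternative route would need the cover or degree structure to be extracted more carefully; since it is dispensable, your K\"onig-based proof stands on its own.
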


Also, we further generalize their result for $\mathcal{L}_{n,k}$. In particular, we prove the following theorem.
\begin{theorem}\label{ch1:th7}
Let $s,t,n$ be three positive integers, $s,t\ge 1$ and $n\ge \lceil\frac{k-1}{2}\rceil$.
If $k$ is odd, then
$$
ex_{bip}(n,K_{s,t},\mathcal{L}_{n,k}) =
\begin{cases}
\dbinom{\frac{k-1}{2}}{s}\dbinom{n}{s},  & s=t, \\
\dbinom{\frac{k-1}{2}}{s}\dbinom{n}{t}+\dbinom{\frac{k-1}{2}}{t}\dbinom{n}{s}, & s\neq t.
\end{cases}
$$
If $k$ is even, then for $s=t$,
$$
ex_{bip}(n,K_{s,t},\mathcal{L}_{n,k}) =
\begin{cases}
\cfrac{kn}{2}-\cfrac{k}{2}+1,  & s=1, \\
\dbinom{\frac{k}{2}-1}{s}\dbinom{n}{s}, & s\ge 2,
\end{cases}
$$
for $s\neq t$,
$$
ex_{bip}(n,K_{s,t},\mathcal{L}_{n,k}) =
\begin{cases}
\cfrac{k}{2}\dbinom{n}{t}+(n-1)\dbinom{\frac{k}{2}-1}{t},  & s=1, t\ge 2, \\
\cfrac{k}{2}\dbinom{n}{s}+(n-1)\dbinom{\frac{k}{2}-1}{s}, & s\ge 2, t=1,\\
\dbinom{\frac{k}{2}-1}{t}\dbinom{n}{s}+\dbinom{\frac{k}{2}-1}{s}\dbinom{n}{t} & s,t\ge 2.
\end{cases}
$$
\end{theorem}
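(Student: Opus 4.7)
The plan is to establish the theorem by proving matching lower and upper bounds. For the lower bound I exhibit explicit $\mathcal{L}_{n,k}$-free bipartite constructions. For odd $k=2m+1$ take $K_{m,n}$, joining a fixed $m$-subset of $A$ to all of $B$; the longest linear forest in this graph has exactly $2m=k-1$ edges (a single path alternating across the sides), so the graph is $\mathcal{L}_{n,k}$-free, and a direct count of $K_{s,t}$-subgraphs (distinguishing which side hosts the $s$-part when $s\ne t$) gives the stated values. For even $k=2m$ I augment $K_{m-1,n}$ by joining one extra vertex $b_1\in B$ to every vertex of $A$; the longest linear forest is then the path $a_m\,b_1\,a_1\,b_2\,a_2\,\dots\,a_{m-1}\,b_m$ of length $2m-1=k-1$, and a case split on whether $s$ or $t$ equals one (since the new edges at $b_1$ contribute $K_{s,t}$-copies only when one side has size one) recovers each subcase of the theorem.

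The upper bound is proved by bipartite shifting, in the spirit of Wang's proof of Theorem \ref{ch1:th3}. I fix orderings on $A$ and $B$, and for $i<j$ in $A$ introduce the $A$-shift $\sigma^A_{i,j}$ that replaces each edge $a_jb$ with $a_ib\notin E(G)$ by the edge $a_ib$; the $B$-shift $\sigma^B_{i,j}$ is defined symmetrically. The two facts I plan to prove are: (i) shifting does not decrease the number of copies of $K_{s,t}$, via the standard injection pairing copies destroyed by the shift with newly created ones, and (ii) shifting preserves $\mathcal{L}_{n,k}$-freeness. Fact (ii) is the main obstacle. Given a hypothetical linear forest $L$ of $k$ edges on at most $n$ vertices in $\sigma(G)$, I must produce one in $G$; the idea is to reverse each ``new'' edge $a_ib\in E(L)\setminus E(G)$ back to its predecessor $a_jb\in E(G)\setminus E(\sigma(G))$, and, when that reversal conflicts with an existing use of $a_j$ in $L$, apply a re-routing that interchanges the roles of $a_i$ and $a_j$ along the affected path, verifying that the resulting subgraph has all degrees at most two, contains no cycle, and still uses at most $n$ vertices.

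Once shifts are applied to fixation the resulting graph $G^{\ast}$ is fully shifted: $N(a_i)=\{b_1,\dots,b_{d_i}\}$ with $d_1\ge\cdots\ge d_n$, and dually on $B$. Within the shifted class the $\mathcal{L}_{n,k}$-free condition translates into a sharp constraint on the degree sequence, which I prove by greedily building a path of the form $a_{m+1}\,b_1\,a_1\,b_2\,a_2\,b_3\,\dots$ whenever the constraint is violated and extracting from it a linear forest of $k$ edges on at most $n$ vertices. Maximising the number of $K_{s,t}$-copies in $G^{\ast}$ subject to this constraint — splitting into cases on the parity of $k$, on whether $s=t$, and (for even $k$) on whether $\min(s,t)=1$ — matches the values exhibited by the constructions of the first paragraph and thereby closes the proof.
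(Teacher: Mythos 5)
Your proposal has two genuine gaps, one in each half. For the lower bound, your even-$k$ construction is the same graph as the paper's (namely $K_{\frac{k}{2}-1,n}$ together with one vertex $b_1$ of the $n$-side joined to all of $A$), but your verification of freeness is false: the longest linear forest in this graph is not the single path of length $k-1$ you describe. Writing $m=\frac{k}{2}$, the vertex-disjoint paths $b_{2i}a_ib_{2i+1}$ for $1\le i\le m-1$ together with $a_m b_1 a_{m+1}$ form a linear forest with $2m=k$ edges; even more economically, the single path $b_2a_1b_3a_2\cdots a_{m-1}b_{m+1}$ together with $a_m b_1 a_{m+1}$ is a $k$-edge linear forest on only $k+2$ vertices. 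So freeness cannot be argued via the length of a longest linear forest; any correct argument has to exploit the order restriction built into $\mathcal{L}_{n,k}$ (a $k$-edge forest with $c$ path components occupies $k+c$ vertices and only becomes a member after padding with isolated vertices up to order $n$), i.e.\ it must count components — and the second example above shows that for $n\ge k+2$ this graph actually does contain a member of $\mathcal{L}_{n,k}$, so the claim needs to be confronted, not asserted. (The paper states freeness of the same graph without proof; your stated justification, as written, is simply incorrect.)

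For the upper bound, the decisive quantitative step is missing, and it is precisely where your route departs from the paper. The paper does not shift at all in the bipartite case: it proves (Lemma \ref{ch4:lem1}, via the K\"onig--Hall theorem, Theorem \ref{ch4:th1}) that an edge-maximal $\mathcal{L}_{n,k}$-free bipartite graph has matching number $\lceil\frac{k-1}{2}\rceil$, hence a vertex cover $T$ of that size; it then dominates $G$ by the graph $G^*$ in which $X_1=X\cap T$ and $Y_1=Y\cap T$ are completely joined to the opposite sides, and maximizes the resulting count $f_{s,t}(x)$, a convex function of the split $x=|X_1|$, at the admissible endpoints (with the additional observation for even $k$ that $x\in\{1,\frac{k}{2}-1\}$). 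In your plan, everything after ``the $\mathcal{L}_{n,k}$-free condition translates into a sharp constraint on the degree sequence'' is asserted rather than proved: you never state what that constraint is, how your greedy path construction respects the order-$n$/component-count budget that distinguishes forbidding $\mathcal{L}_{n,k}$ from forbidding every $k$-edge linear forest, nor how the optimization over shifted graphs singles out the asymmetric extremal structure (one dominating vertex on one side and $\frac{k}{2}-1$ on the other) in the even case versus the one-sided cover in the odd case. By contrast, the step you flag as the main obstacle, preservation of freeness under same-side shifts, is the easy part: such a shift is a Kelmans shift that keeps the graph bipartite, so the paper's Lemma \ref{ch2:lem1} applies verbatim. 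The structural characterization and the extremal count carry the whole theorem, and your proposal leaves exactly those open.
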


The paper is organised as follows. In Section \ref{se2}, we introduce the shifting operation on graphs and some properties of this operation. In Section \ref{se3}, we prove Theorems
\ref{ch1:th5} and \ref{ch1:th6}. In Section \ref{se4}, we prove Theorem \ref{ch1:th7}.

\section{Preliminaries} \label{se2}
\qquad In this section we will present some notations needed in the subsequent sections, and then introduce the shifting operation on graphs and some properties of this operation.

Denote by $[n]$ the set $\{1,2,\cdots,n\}$. Let $G$ be a simple graph, we denote by $V(G)$ and $E(G)$ the vertex set and the edge set of $G$, respectively.
Denote by $e(G)$ the number of edges of $G$. For a graph $G$ and its subgraph $H$, we use $G-H$ to denote a graph obtained from G by deleting all vertices of $H$ with all incident edges. For any subset $S\subset V(G)$, we denote by $e(S)$ the number of edges with two endpoints in $S$ and
$G[S]$ the subgraph induced by $S$. For two disjoint subsets $S,T\subset V(G)$, we denote by $G[S,T]$ the induced bipartite graph between $S$ and $T$.
Let $\bar{S}=V(G)\setminus S$. Let $e(S,\bar{S})$ be the number of edges with one endpoint in $S$ and the other endpoint in $\bar{S}$. For any $v\in V(G)$ and $S\subset V(G)$, we denote by $d_S(v)$
the number of neighbors of $v$ in $S$. Denote by $\nu(G)$ the number of edges in a maximum matching of $G$. Let $\mathcal{N}(G,T)$ be the number of $T$ in $G$.

Suppose a graph $G$ has vertex set $V(G)=[n]$ and edge set $E(G)$. Here, edges in $E(G)$ are viewed as subsets of $V(G)$.
For $1\le i<j\le n$ and $e\in E(G)$, we define a \textit{shifting operation} (also known as \textit{Kelmans transformation} \cite{Kelmans}) $S_{ij}$ on $e$
as follows:
$$
S_{ij}(e)=
\begin{cases}
(e-\{j\})\cup \{i\},  & \mbox{if }j\in e, i\notin e\mbox{ and } (e-\{j\})\cup \{i\}\notin E(G), \\
e, & \mbox{otherwise. }
\end{cases}
$$
We define $S_{ij}(G)$ to be a graph on vertex set $V(G)$ with edge set $\{S_{ij}(e):e\in E(G)\}$.

It is not hard to see that $e(S_{ij}(G))=e(G)$. Further, we show the following lemma.
\begin{lemma}\label{ch2:lem1}
Suppose $G$ is a graph on vertex set $[n]$. Then for any $1\le i<j\le n$, we have that $S_{ij}(G)$ is also $\mathcal{L}_{n,k}$-free if $G$ is
$\mathcal{L}_{n,k}$-free.
\end{lemma}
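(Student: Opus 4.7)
The plan is to prove the contrapositive: from any linear forest $L\subseteq S_{ij}(G)$ with $k$ edges on the vertex set $[n]$, I will construct a linear forest $L^*\subseteq G$ on $[n]$ with the same $k$ edges. Two elementary properties of the shift drive the construction: (i) every edge in $E(S_{ij}(G))\setminus E(G)$ has the form $\{i,x\}$ with $\{j,x\}\in E(G)$ and $\{i,x\}\notin E(G)$; and (ii) every edge $\{j,y\}$ of $S_{ij}(G)$ with $y\ne i$ satisfies both $\{j,y\}\in E(G)$ and $\{i,y\}\in E(G)$, because otherwise the shift would have replaced $\{j,y\}$ by $\{i,y\}$.

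Call an edge of $L$ \emph{bad} if it lies in $E(S_{ij}(G))\setminus E(G)$. By (i) every bad edge is incident to $i$, so there are at most $d_L(i)\le 2$ of them; by (ii) each edge of $L$ incident to $j$ already lies in $E(G)$ together with its ``$i$-translate''. A useful by-product of (ii) is that if $\{i,x\}$ is bad, then $x$ is not a neighbour of $j$ in $L$, since otherwise $\{j,x\}\in L\subseteq S_{ij}(G)$ would force $\{i,x\}\in E(G)$.

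I construct $L^*$ by case analysis on the paths $P_i,P_j\subseteq L$ containing $i$ and $j$ respectively. If $P_i\ne P_j$ (which covers the cases where $i$ or $j$ is isolated in $L$), I replace each bad edge $\{i,x\}$ by the $G$-edge $\{j,x\}$, and if this would push $d_{L^*}(j)$ above $2$, simultaneously reroute an edge $\{j,y\}\in L$ to its $G$-translate $\{i,y\}$. Since $P_i$ and $P_j$ are disjoint, these swaps just interchange the roles of $i$ and $j$ on their respective paths and introduce no cycles. If instead $P_i=P_j=:P$, write $P\colon v_0v_1\cdots v_m$ with $v_a=i$, $v_b=j$ and $a<b$. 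A naive one-edge shift of a bad edge would close part of $P$ into a cycle, so I perform a coordinated two-edge swap: after deleting the bad edge at $i$ and a suitable edge of $L$ at $j$ (chosen on the ``opposite side'' of the $i$--$j$ segment of $P$), I insert the two new edges supplied by (i) and (ii). A direct trace shows that $P$ reassembles into a single new path of the same length with the roles of $i$ and $j$ exchanged. When $i$ has two bad edges and $d_L(j)=2$, the analogous four-edge full swap at $i$ and $j$ does the job.

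The main obstacle is exactly this cycle-avoidance check in the case $P_i=P_j$: the pairing of deleted edges at $i$ and $j$ has to be chosen so that the two newly inserted edges land in different components of $L$ minus the deletions. Once the correct pairing is identified, the remaining sub-cases (parameterised by $(d_L(i),d_L(j))\in\{0,1,2\}^2$, the endpoint positions of $i,j$ on $P$, and whether $i,j$ are adjacent on $P$) are routine, and in every one of them $L^*$ turns out to be a linear forest in $G$ with $k$ edges, contradicting the $\mathcal{L}_{n,k}$-freeness of $G$ and establishing the lemma.
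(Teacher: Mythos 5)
Your proposal is correct and follows essentially the same route as the paper's proof: both argue by contraposition and, for each placement of $i$ and $j$ in a $k$-edge linear forest of $S_{ij}(G)$, reroute the (at most two) shifted edges at $i$ together with suitably paired edges at $j$ so as to reassemble a $k$-edge linear forest in $G$. Your facts (i)--(ii), the observation that a bad endpoint of $i$ cannot be an $L$-neighbour of $j$, and the ``new edges must land in different components'' criterion are exactly what the paper's explicit case-by-case constructions implement, merely organized by $(d_L(i),d_L(j))$ and same-versus-different components rather than by the type of forbidden linear forest.
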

\begin{proof}
Assume that $G$ is $\mathcal{L}_{n,k}$-free. Suppose to the contrary that $S_{ij}(G)$ is not $\mathcal{L}_{n,k}$-free, then $S_{ij}(G)$ contains a graph of $\mathcal{L}_{n,k}$ as its subgraph. In the following, we will discuss three cases.
\begin{case}
$S_{ij}(G)$ contains a copy of $P_{k+1}$ as its subgraph.
\end{case}
Generally, we denote that $P_{k+1}=v_0v_1v_2\cdots v_k$.
\begin{subcase}
Assume that $i=v_s$, $j=v_t$ where $|s-t|\ge 3$, $0<s,t<k$.
\end{subcase}
Without loss of generality, we assume that $s<t$. We suppose that none of the edges $\{v_{s-1},v_s\}$ and $\{v_s,v_{s+1}\}$ are edges of the original graph $G$. However, by the definition of the transformation $\{v_t, v_{s-1}\}$, $\{v_t, v_{s+1}\}$, $\{v_s, v_{t-1}\}$ and $\{v_s, v_{t+1}\}$ are all edges of the original graph $G$ and so $v_0\cdots v_{s-1}v_tv_{s+1}v_{s+2}\cdots v_{t-1}v_sv_{t+1}v_{t+2}\cdots v_k$ is a path of the same length in the original graph $G$ using the same vertices, a contradiction. If both of the edges $\{v_{s-1},v_s\}$ and $\{v_s,v_{s+1}\}$ are edges of the original graph $G$, then the path $P_{k+1}=v_0v_1v_2\cdots v_k$ is also a subgraph of the original graph $G$, a contradiction. If $\{v_{s-1},v_s\}\in E(G)$ but $\{v_s,v_{s+1}\}\notin E(G)$, then $v_0\cdots v_{s-1}v_sv_{t-1}v_{t-2}\cdots v_{s+1}v_tv_{t+1}\cdots v_k$ is a path of the same length in the original graph $G$ using the same vertices, a contradiction. Similarly, if $\{v_s,v_{s+1}\}\in E(G)$ but $\{v_{s-1},v_s\}\notin E(G)$, then $v_0\cdots v_{s-1}v_tv_{t-1}\cdots v_{s+1}v_sv_{t+1}\cdots v_k$ is a path of the same length in the original graph $G$ using the same vertices, a contradiction.
\begin{subcase}
Assume that $i=v_s$, $j=v_t$ where $|s-t|\le 2$, $0<s,t<k$.
\end{subcase}
Without loss of generality, we assume that $s<t$. If $t-s=2$, then by the definition of the transformation the edges $\{v_s,v_{s+1}\}$ and $\{v_{s+1},v_t\}$ are edges of the original graph $G$. If the edge $\{v_{s-1},v_s\}$ is the edge of the original graph $G$, then the path $P_{k+1}=v_0v_1v_2\cdots v_k$ is also a subgraph of the original graph $G$, a contradiction. If the edge $\{v_{s-1},v_s\}$ is not the edge of the original graph $G$, then $v_0\cdots v_{s-1}v_tv_{t-1}v_sv_{t+1}\cdots v_k$ is a path of the same length in the original graph $G$ using the same vertices, a contradiction. If $t-s=1$, then by the definition of the transformation the edge $\{v_s,v_t\}$ is the edge of the original graph $G$. If the edge $\{v_{s-1},v_s\}$ is the edge of the original graph $G$, then the path $P_{k+1}=v_0v_1v_2\cdots v_k$ is also a subgraph of the original graph $G$, a contradiction. If the edge $\{v_{s-1},v_s\}$ is not the edge of the original graph $G$, then $v_0\cdots v_{s-1}v_tv_sv_{t+1}\cdots v_k$ is a path of the same length in the original graph $G$ using the same vertices, a contradiction.
\begin{subcase}
Assume that $i=v_s$, $j=v_t$ where either one of $0$ and $k$ is in $\{s,t\}$ or $\{0,k\}=\{s,t\}$.
\end{subcase}
Without loss of generality, we assume that $s<t$. Assume that $s=0$ and $t-s\le 2$. It follows from the definition of the transformation that the path $P_{k+1}=v_0v_1v_2\cdots v_k$ is also a subgraph of the original graph $G$, a contradiction. Assume that $s=0$ and $t-s\ge 3$. If $\{v_0,v_1\}\in E(G)$, then the path $P_{k+1}=v_0v_1v_2\cdots v_k$ is also a subgraph of the original graph $G$, a contradiction. If $\{v_0,v_1\}\notin E(G)$, then $v_0v_{t-1}\cdots v_1v_tv_{t+1}\cdots v_k$ is a path of the same length in the original graph $G$ using the same vertices, a contradiction. Similarly, if $t=k$, $t-s\le 2$ and $\{v_{s-1},v_s\}\in E(G)$, then the path $P_{k+1}=v_0v_1v_2\cdots v_k$ is also a subgraph of the original graph $G$, a contradiction. If $t=k$, $t-s\le 2$ and $\{v_{s-1},v_s\}\notin E(G)$, then by the definition of the transformation $v_0v_1\cdots v_{s-1}v_k\cdots v_s$ is a path of the same length in the original graph $G$ using the same vertices, a contradiction. If $t=k$, $t-s\ge 3$ and none of the edges $\{v_{s-1},v_s\}$ and $\{v_s,v_{s+1}\}$ are edges of the original graph $G$, then $v_0\cdots v_{s-1}v_kv_{s+1}v_{s+2}\cdots v_{k-1}v_s$ is a path of the same length in the original graph $G$ using the same vertices, a contradiction. If both of the edges $\{v_{s-1},v_s\}$ and $\{v_s,v_{s+1}\}$ are edges of the original graph $G$, then the path $P_{k+1}=v_0v_1v_2\cdots v_k$ is also a subgraph of the original graph $G$, a contradiction. If $\{v_{s-1},v_s\}\in E(G)$ but $\{v_s,v_{s+1}\}\notin E(G)$, then $v_0\cdots v_{s-1}v_sv_{k-1}v_{k-2}\cdots v_{s+1}v_k$ is a path of the same length in the original graph $G$ using the same vertices, a contradiction. Similarly, if $\{v_s,v_{s+1}\}\in E(G)$ but $\{v_{s-1},v_s\}\notin E(G)$, then $v_0\cdots v_{s-1}v_kv_{k-1}\cdots v_{s+1}v_s$ is a path of the same length in the original graph $G$ using the same vertices, a contradiction.
\begin{subcase}\label{subcase2.1.4}
Assume that $i=v_s$ where $0\le s\le k$ and $j\notin \{v_0,v_1,\cdots, v_k\}$.
\end{subcase}
If $s=0$ and $\{v_0,v_1\}\in E(G)$, then the path $P_{k+1}=v_0v_1v_2\cdots v_k$ is also a subgraph of the original graph $G$, a contradiction. If $s=0$ and $\{v_0,v_1\}\notin E(G)$,then $jv_1v_2\cdots v_k$ is a path of the same length in the original graph $G$, a contradiction. If $s=k$ and $\{v_{k-1},v_k\}\in E(G)$, then the path $P_{k+1}=v_0v_1v_2\cdots v_k$ is also a subgraph of the original graph $G$, a contradiction. If $s=k$ and $\{v_{k-1},v_k\}\notin E(G)$,then $v_0v_1\cdots v_{k-1}j$ is a path of the same length in the original graph $G$, a contradiction.
If $0<s<k$ and none of the edges $\{v_{s-1},v_s\}$ and $\{v_s,v_{s+1}\}$ are edges of the original graph $G$, then $v_0\cdots v_{s-1}v_jv_{s+1}v_{s+2}\cdots v_k$ is a path of the same length in the original graph $G$, a contradiction. If both of the edges $\{v_{s-1},v_s\}$ and $\{v_s,v_{s+1}\}$ are edges of the original graph $G$, then the path $P_{k+1}=v_0v_1v_2\cdots v_k$ is also a subgraph of the original graph $G$, a contradiction. If $\{v_{s-1},v_s\}\in E(G)$ but $\{v_s,v_{s+1}\}\notin E(G)$, then $(v_0v_1\cdots v_{s-1}v_s)\cup (jv_{s+1}v_{s+2}\cdots v_k)\in \mathcal{L}_{n,k}$ is in the original graph $G$, a contradiction. Similarly, if $\{v_s,v_{s+1}\}\in E(G)$ but $\{v_{s-1},v_s\}\notin E(G)$, then $(v_0v_1\cdots v_{s-1}j)\cup (v_sv_{s+1}\cdots v_k)\in \mathcal{L}_{n,k}$ is in the original graph $G$, a contradiction.
\begin{case}
$S_{ij}(G)$ contains a copy of $k\cdot P_2$ as its subgraph.
\end{case}
If $\{i,j\}\in E(k\cdot P_2)$, then the same subgraph $k\cdot P_2$ is in the original graph $G$, a contradiction. If $i,j\in V(k\cdot P_2)$ but
$\{i,j\}\notin E(k\cdot P_2)$, we assume that both of the edges $\{i,i'\}$ and $\{j,j'\}$ are edges of $k\cdot P_2$ in $S_{ij}(G)$. If $\{i,i'\}\in E(G)$, then the same subgraph $k\cdot P_2$ is also in the original graph $G$, a contradiction. If $\{i,i'\}\notin E(G)$, then $(k\cdot P_2-(\{i,i'\}\cup \{j,j'\}))\cup (ij'j)\in \mathcal{L}_{n,k}$ is in the original graph $G$, a contradiction. Assume $i\in V(k\cdot P_2)$ but $j\notin V(k\cdot P_2)$. without loss of generality, we suppose that $\{i,i'\}\in E(k\cdot P_2)$. If $\{i,i'\}\in E(G)$, then $k\cdot P_2$ is also in the original graph $G$, a contradiction. If $\{i,i'\}\notin E(G)$, then $(k\cdot P_2 - \{i,i'\})\cup \{i',j\}\in \mathcal{L}_{n,k}$ is in the original graph $G$, a contradiction.
\begin{case}
$S_{ij}(G)$ contains a copy of graph in $\mathcal{L}_{n,k}\setminus \{P_{k+1},k\cdot P_2\}$ as its subgraph.
\end{case}
Suppose that $S_{ij}(G)$ contains a copy of $H\in \mathcal{L}_{n,k}\setminus \{P_{k+1},k\cdot P_2\}$ as its subgraph. If $i,j\in V(H)$ and $i$ and $j$ is in the
same connected component of $S_{ij}(G)$, then from the above analysis we can find a copy of graph in $\mathcal{L}_{n,k}$ in the original graph $G$, a contradiction. It is easy to see that each component of $H$ is actually a path. Assume $i,j\in V(H)$ and $i$ and $j$ are origin or terminus of two different connected components $P$ and $Q$ in $H$. Suppose that $\{i,i'\}\in E(P)$ and $\{j,j'\}\in E(Q)$. If $\{i,i'\}\in E(G)$, then $H$ is also in $G$, a contradiction. If $\{i,i'\}\notin E(G)$, then $((P-\{i,i'\})\cup \{i',j\})\cup ((Q-\{j,j'\})\cup \{i,j'\})\cup (H-(P\cup Q))\in \mathcal{L}_{n,k}$ is in the original graph $G$, a contradiction. Suppose that $i\in V(P)$ and $j\in V(Q)$, where $P=i_0i_1\cdots i'ii''\cdots i_s$ and $Q=j_0j_1\cdots j'jj''\cdots j_t$ are two connected components of $H$. If none of the edges $\{i',i\}$ and $\{i,i''\}$ are the edges of the original graph $G$, then by the definition of transformation $\{i',j\}$,$\{i'',j\}$,$\{i,j'\}$ and $\{i,j''\}$ are all the edges of the original graph $G$. And so $((P-(i'ii''))\cup (i'ji''))\cup ((Q-(j'jj''))\cup (j'ij''))\cup (H-(P\cup Q))\in \mathcal{L}_{n,k}$ is in $G$, a contradiction. If both of the edges $(i',i)$ and $(i,i'')$ are edges of the original graph $G$, then the graph $H$ is also a subgraph of the original graph $G$, a contradiction. If $\{i',i\}\in E(G)$ but $\{i,i''\}\notin E(G)$, then $(i_0i_1\cdots i'ij'\cdots j_1j_0)\cup (i_si_{s-1}\cdots i''jj''\cdots j_t)\cup (H-(P\cup Q))\in \mathcal{L}_{n,k}$ is in $G$, a contradiction. Similarly, if $\{i,i''\}\in E(G)$ but $\{i',i\}\notin E(G)$, then $(i_0i_1\cdots i'jj'\cdots j_1j_0)\cup (i_s\cdots i''ij''\cdots j_t)\cup (H-(P\cup Q))\in \mathcal{L}_{n,k}$ is in $G$, a contradiction. Suppose that $i,j\in V(H)$ and $j$ is the origin or terminus of a path $Q$ in $H$ but $i\in V(P)$, where $P=i_0i_1\cdots i'ii''\cdots i_s$ and $Q$ are two connected components of $H$. Assume that $jj'j''\in Q$. If none of the edges $\{i',i\}$ and $\{i,i''\}$ are the edges of the original graph $G$, then by the definition of transformation $\{i',j\}$,$\{i'',j\}$ and $\{i,j'\}$ are all the edges of the original graph $G$. And so $((P-(i'ii''))\cup (i'ji''))\cup ((Q-\{j,j'\})\cup \{i,j'\}\cup (H-(P\cup Q))\in \mathcal{L}_{n,k}$ is in $G$, a contradiction. If both of the edges $(i',i)$ and $(i,i'')$ are edges of the original graph $G$, then the graph $H$ is also a subgraph of the original graph $G$, a contradiction. If $\{i',i\}\in E(G)$ but $\{i,i''\}\notin E(G)$, then $((P-\{i,i''\})\cup (ij'ji''))\cup (Q-(jj'j''))\cup (H-(P\cup Q))\in \mathcal{L}_{n,k}$ is in $G$, a contradiction. Similarly, if $\{i,i''\}\in E(G)$ but $\{i',i\}\notin E(G)$, then $((P-\{i',i\})\cup (i'ji))\cup (Q-(jj'j''))\cup (H-(P\cup Q))\in \mathcal{L}_{n,k}$ is in $G$, a contradiction. Suppose that $i,j\in V(H)$ and $i$ is the origin or terminus of a path $P$ in $H$ but $j\in V(Q)$, where $P$ and $Q=j_0j_1\cdots j'jj''\cdots j_t$ are two connected components of $H$. Assume that $\{i,i'\}\in P$. If $\{i,i'\}\in E(G)$, then $H$ is also in $G$, a contradiction. If $\{i.i'\}\notin E(G)$, then $((P-\{i,i'\})\cup (i'jj''\cdots j_t))\cup (j_0j_1\cdots j'i)\cup (H-(P\cup Q))\in \mathcal{L}_{n,k}$ is in $G$, a contradiction.
If $i\in V(H)$ but $j\notin V(H)$, then we can find a graph $G'\in \mathcal{L}_{n,k}$ in the original graph $G$ based on the similar analysis to Subcase \ref{subcase2.1.4}, a contradiction.

Combining all the cases, we conclude that $S_{ij}(G)$ is $\mathcal{L}_{n,k}$-free if $G$ is $\mathcal{L}_{n,k}$-free.
\end{proof}
In \cite{wang}, Wang proved that the shifting operation cannot reduce the number of the copies of $K_s$ and $K^*_{s,t}$.
\begin{lemma}[\cite{wang}]\label{ch2:lem2}
Let $G$ be a graph on vertex set $[n]$. For $1\le i<j\le n$, we have that
$$
\mathcal{N}(S_{ij}(G),K_s)\ge \mathcal{N}(G,K_s) \mbox{ and } \mathcal{N}(S_{ij}(G),K^*_{s,t})\ge \mathcal{N}(G,K^*_{s,t}).
$$
\end{lemma}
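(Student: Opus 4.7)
The plan is to study how the shifting operation changes the incidence structure at $i$ and $j$ and then count copies of $K_s$ (respectively $K^*_{s,t}$) according to how many of $\{i,j\}$ they contain. The key structural fact implicit in the definition of $S_{ij}$ is that the induced subgraph on $V(G)\setminus\{i,j\}$ is preserved, that the edge $\{i,j\}$ lies in $S_{ij}(G)$ if and only if it lies in $G$, and that the modified neighborhoods of $i$ and $j$ are
\[
N_{S_{ij}(G)}(i)\setminus\{j\}=(N_G(i)\cup N_G(j))\setminus\{i,j\},\qquad N_{S_{ij}(G)}(j)\setminus\{i\}=(N_G(i)\cap N_G(j))\setminus\{i,j\}.
\]
Setting $X=N_G(i)\setminus\{j\}$ and $Y=N_G(j)\setminus\{i\}$, these right-hand sides become $X\cup Y$ and $X\cap Y$ respectively.

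For $K_s$ I would split $\mathcal{N}(\cdot,K_s)$ according to $|V(K_s)\cap\{i,j\}|$. Copies disjoint from $\{i,j\}$ live in the unchanged subgraph on $V\setminus\{i,j\}$ and contribute identically. Copies containing both $i$ and $j$ require $\{i,j\}\in E(G)$ and their other $s-2$ vertices form an $(s-2)$-clique of $G$ in $N_G(i)\cap N_G(j)\subseteq V\setminus\{i,j\}$, so this contribution is the same in both graphs. Copies containing exactly one of $i$ and $j$ contribute $c_{s-1}(X)+c_{s-1}(Y)$ in $G$ and $c_{s-1}(X\cup Y)+c_{s-1}(X\cap Y)$ in $S_{ij}(G)$, where $c_r(Z)$ denotes the number of $r$-cliques of $G$ contained in $Z\subseteq V\setminus\{i,j\}$. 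The $K_s$ half of the lemma thus reduces to the supermodular inequality
\[
c_r(X\cup Y)+c_r(X\cap Y)\ge c_r(X)+c_r(Y),
\]
which I would prove by classifying the $r$-cliques inside $X\cup Y$ according to how they meet $X\setminus Y$ and $Y\setminus X$: cliques entirely in $X\cap Y$, or entirely in $X$ but meeting $X\setminus Y$, or entirely in $Y$ but meeting $Y\setminus X$ contribute equally to both sides, while cliques meeting both $X\setminus Y$ and $Y\setminus X$ contribute only to the left-hand side.

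For $K^*_{s,t}$ I would write a copy as an ordered pair $(A,B)$ with $A$ an $s$-clique of $G$ and $B$ a $t$-subset of $V\setminus A$ such that every vertex of $B$ is adjacent in $G$ to every vertex of $A$; then $\mathcal{N}(G,K^*_{s,t})=\sum_A\binom{|N_G(A)\setminus A|}{t}$, with the sum taken over $s$-cliques $A$ and $N_G(A)=\bigcap_{v\in A}N_G(v)$. The same partition by the intersection of $A\cup B$ with $\{i,j\}$ reduces each case to counting structures whose clique part is governed by $X$ and $Y$ and whose join part is counted by binomial coefficients of the form $\binom{\cdot}{t}$; the non-decrease then follows by combining the supermodular inequality above with the monotonicity of $\binom{\cdot}{t}$. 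The main obstacle I anticipate is the bookkeeping: because $A$ and $B$ play asymmetric roles, one has to enumerate the subcases $i,j\in A$; $i,j\in B$; $i\in A,j\in B$; $i\in B,j\in A$; exactly one of $i,j$ in $A\cup B$; and both outside, matching terms in $G$ and $S_{ij}(G)$ case by case and identifying which version of the union-intersection inequality applies. The underlying idea, however, is uniformly the clique-count supermodularity proved for $K_s$.
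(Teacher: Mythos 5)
Your proposal is correct, but note that there is nothing in this paper to compare it with: Lemma \ref{ch2:lem2} is quoted from Wang \cite{wang} without proof, so your argument has to stand on its own --- and it does. The structural facts you extract from the definition of $S_{ij}$ (the graph on $V\setminus\{i,j\}$ is untouched, $\{i,j\}$ is an edge of $S_{ij}(G)$ iff it is one of $G$, and the punctured neighborhoods of $i$ and $j$ become $X\cup Y$ and $X\cap Y$) are all accurate, and the $K_s$ half is complete: the split by $|V(K_s)\cap\{i,j\}|$ and the classification proof of $c_r(X\cup Y)+c_r(X\cap Y)\ge c_r(X)+c_r(Y)$ are exactly what is needed. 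For the $K^*_{s,t}$ half your case list is the right one and the plan goes through, but be aware that "the supermodular inequality above plus monotonicity" is a slight understatement of what the subcases require: when exactly one of $i,j$ lies in the independent part $B$ you need the clique supermodularity in its \emph{weighted} form $\sum_{A\subseteq X\cup Y}w(A)+\sum_{A\subseteq X\cap Y}w(A)\ge\sum_{A\subseteq X}w(A)+\sum_{A\subseteq Y}w(A)$ with $w(A)=\binom{|N_G(A)|}{t-1}\ge 0$, and when $i\in A$, $j\in B$ (or symmetrically) the clique range is $X\cap Y$ on both sides and what you need is the set version $\binom{|S\cup T|}{r}+\binom{|S\cap T|}{r}\ge\binom{|S|}{r}+\binom{|T|}{r}$ applied to $S=N_G(A_0)\cap X$, $T=N_G(A_0)\cap Y$ (equivalently, convexity of $\binom{x}{r}$ under majorization). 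Both variants follow from the identical classification argument you used for cliques, so this is bookkeeping rather than a gap, but spelling them out is what makes the case-by-case matching close; with that, your proof is a valid self-contained replacement for the citation, in the same spirit as the standard shifting argument of \cite{wang}.
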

Suppose that $G$ is a graph on vertex set $[n]$. If $S_{ij}(G)=G$ holds for all $i,j$ with $1\le i<j\le n$, then we call $G$ a \textit{shifted graph}.
Further, if $G$ is a shifted graph, then for any $\{u,v\}\in E(G)$, $u'<u$ and $u'\neq v$, we always have $\{u',v\}\in E(G)$. otherwise, we have
$S_{u'u}(G)\neq G$, a contradiction.

Let $G$ be a graph on $n$ vertices, $\mathcal{P}$ a property defined on $G$, and $k$ a positive integer. We call the property $\mathcal{P}$  \textit{$k$-stable} if whenever $G+uv$ has the property
$\mathcal{P}$ and $d_{G}(u)+d_{G}(v)\ge k$, then $G$ itself has the property $\mathcal{P}$. In \cite{N-W}, Ning and Wang proved the property ``$\mathcal{L}_{n,k}$-free'' is $k$-stable.
\begin{lemma}[\cite{N-W}]\label{ch2:lem3}
Let $G$ be a graph on $n$ vertices. Suppose that $u,v\in V(G)$ with $d(u)+d(v)\ge k$. Then $G$ is $\mathcal{L}_{n,k}$-free if and only if $G+uv$ is $\mathcal{L}_{n,k}$-free.
\end{lemma}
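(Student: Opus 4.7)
The backward implication follows from $G\subseteq G+uv$. For the forward direction, I argue by contradiction: suppose $G$ is $\mathcal{L}_{n,k}$-free, $d_G(u)+d_G(v)\ge k$, but $G+uv$ contains a linear forest $F$ with $k$ edges. Necessarily $uv\in E(F)$. Let $P$ be the path component of $F$ containing $uv$ and write $P=w_0w_1\cdots w_\ell$ with $u=w_m$ and $v=w_{m+1}$; let $P_u$ and $P_v$ denote the two sub-paths of $P$ obtained by removing $uv$ (possibly trivial single vertices), and put $F^\ast=F-uv$, a $(k-1)$-edge linear forest in $G$.

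The plan is to construct a linear forest of size $k$ in $G$, contradicting the hypothesis. The first and simplest route is a direct extension at $u$ or $v$: pick $w\in N_G(u)\setminus\{v\}$ with $\deg_{F^\ast}(w)\le 1$ and $w\notin V(P_u)$ (or symmetrically for $v$); then $F^\ast+uw$ is the desired linear forest. If no direct extension is possible, all of $N_G(u)$ lies in $V(P_u)\cup I$ and all of $N_G(v)$ in $V(P_v)\cup I$, where $I$ denotes the set of internal (degree-$2$) vertices of $F^\ast$.

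If direct extension fails, I perform P\'osa-style rotations along $P_u$ and $P_v$: each internal neighbor of $u$ in $P_u$ produces, via the standard rotation (swap $u a_{j+1}$ for $u a_j$, reversing the relevant segment), a new $(k-1)$-edge linear forest in $G$ whose $u$-endpoint is replaced by a different vertex from $V(P_u)$. Iterating this generates a set $U^\ast\subseteq V(P_u)$ of possible new $u$-endpoints, and symmetrically a set $V^\ast\subseteq V(P_v)$. A Bondy--Chv\'atal pigeonhole using $d_G(u)+d_G(v)\ge k$ then guarantees the existence of an edge $u^\ast v^\ast\in E(G)$ with $u^\ast\in U^\ast$ and $v^\ast\in V^\ast$; this edge reconnects the rotated halves of $P$ into a path on $V(P)$ of length $\ell$ that avoids the edge $uv$. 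Substituting this new path in $F$ yields an $\mathcal{L}_{n,k}$-member inside $G$, the contradiction.

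The principal obstacle is the meticulous case analysis: one must separately handle whether $P_u$ or $P_v$ is trivial, whether $N_G(u)$ or $N_G(v)$ includes vertices in components of $F^\ast$ other than $P_u,P_v$ (which triggers a component-swap variant of the same rotation step), and whether successive rotations produce endpoints that merge or collide with already-rotated vertices. One must also verify that no stage of the argument inadvertently creates a cycle or a degree-$3$ vertex. The combinatorial skeleton, however, is exactly the Bondy--Chv\'atal closure for Hamilton cycles adapted to the linear-forest setting, with linear-forest components playing the role of the Hamilton path and $d_G(u)+d_G(v)\ge k$ supplying the pigeonhole guarantee.
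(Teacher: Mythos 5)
The lemma is quoted by the paper from \cite{N-W} without proof, so your attempt has to stand on its own, and as written it has a genuine gap at the decisive step. The reduction is fine: the backward direction is trivial, and in the forward direction one may assume $uv\in F$, set $F^*=F-uv$, and observe that if no direct extension exists then every $G$-neighbour of $u$ lies in $V(P_u)\setminus\{u\}$ or is a degree-$2$ vertex of $F^*$, and symmetrically for $v$. The problem is what follows: you assert that a ``Bondy--Chv\'atal pigeonhole using $d_G(u)+d_G(v)\ge k$'' produces an edge $u^*v^*\in E(G)$ between the rotation-generated sets $U^*\subseteq V(P_u)$ and $V^*\subseteq V(P_v)$. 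No argument is offered for this, and it is not true in the generality needed, because $d_G(u)+d_G(v)\ge k$ does not confine $N_G(u)\cup N_G(v)$ to $V(P)$. For example (for $k\ge 4$), $F$ may consist of the single edge $uv$ together with a path $Q=c_0c_1\cdots c_{k-1}$ of $G$, with $N_G(u)=N_G(v)=\{c_1,\dots,c_{k-2}\}$; then $d_G(u)+d_G(v)=2(k-2)\ge k$, no rotation along $P_u$ or $P_v$ is possible, $U^*=\{u\}$, $V^*=\{v\}$, and there is no $U^*$--$V^*$ edge at all, yet the $k$-edge linear forest of $G$ that the lemma promises arises only from an exchange inside $Q$ (delete $c_1c_2$, add $uc_1$ and $vc_2$). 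So the situation you defer to a ``component-swap variant'' is not a peripheral nuisance; it is where the entire difficulty lives, and the pigeonhole you state does not address it.

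What actually closes the argument is an Ore-type count against the $k-1$ edges of $F^*$, not against the vertices of $P$, and rotations are not needed. One first records the exploitable ``crossings'': if some $F^*$-edge $yy'$ has $u$ adjacent to $y$ and $v$ adjacent to $y'$ (with the edge lying in $P_u$, in $P_v$, or in any other component, and $y,y'$ oriented suitably), then deleting $yy'$ and adding $uy,\ vy'$ gives a linear forest with $k$ edges in $G$, a contradiction. If no direct extension and no crossing exists, assign to each $w\in N_G(u)$ an incident edge of $F^*$ (on $P_u$ the edge on the $u$-side of $w$, on $P_v$ the edge on the side of $w$ away from $v$, in other components the successor edge for a fixed orientation), and to each $w\in N_G(v)$ the mirror-image choice; one checks that any coincidence among these assigned edges is exactly a forbidden crossing, so the assignment is injective from $N_G(u)\,\dot\cup\,N_G(v)$ into $E(F^*)$ and hence $d_G(u)+d_G(v)\le e(F^*)=k-1<k$, contradicting the hypothesis. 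This counting step is the only place where $d_G(u)+d_G(v)\ge k$ enters, and it is precisely what is missing from your proposal; supplying it (together with the crossing exchanges in components other than $P_u,P_v$) would turn your sketch into a correct proof in the spirit of the argument of \cite{N-W}.
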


\section{The generalized Tur\'{a}n number of spanning linear forests}\label{se3}
\quad\quad In this section, we determine the exact values of $ex(n,K_s,\mathcal{L}_{n,k})$ and $ex(n,K^*_{s,t},\mathcal{L}_{n,k})$ by characterizing all the shifted graph which contains a largest linear forest containing $k-1$ edges.

For $\lceil\frac{k+1}{2}\rceil\le m\le k$, we define a graph $H(n,k,m)$ on vertex set $[n]$ as follows. Let $A=[m]$, $B=[n]\setminus A$
and $C=[k-m]\subset A$. The edge set of $H(n,k,m)$ consists of all edges
between $B$ and $C$ together with all edges in $A$. In the following lemma, we characterize all the shifted graph
which contains a largest linear forest containing $k-1$ edges.

\begin{lemma}\label{ch3:lem1}
Let $G$ be a shifted graph on vertex set $[n]$ which contains a largest linear forest containing $k-1$ edges. Then $G$ is a subgraph of $H(n,k,m)$ for some $\lceil \frac{k+1}{2}\rceil\le m\le k$.
\end{lemma}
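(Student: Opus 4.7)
The plan is to reformulate $G\subset H(n,k,m)$ as a pair of numerical inequalities on the ``profile'' of $G$, and then prove the existence of a suitable $m$ by a contradiction that exhibits a $k$-edge linear forest in $G$.

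First, I will use the shifted structure of $G$. Since $G$ is shifted on $[n]$, each neighborhood $N_G(v)$ is an initial segment $[f(v)]\setminus\{v\}$ of $[n]\setminus\{v\}$, where $f(v):=\max(N_G(v)\cup\{0\})$. The shifting condition is equivalent to the symmetric consistency relation $f(i)\ge j\iff f(j)\ge i$ for distinct $i,j$, which in turn forces $f$ to be non-increasing. Comparing this with the profile of $H(n,k,m)$ (namely $n$ on $[k-m]$, $m$ on $[k-m+1,m]$, and $k-m$ on $[m+1,n]$), the inclusion $G\subset H(n,k,m)$ is equivalent, by monotonicity of $f$, to the two point-wise inequalities $f(m+1)\le k-m$ and $f(k-m+1)\le m$. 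Writing $T:=\{v:f(v)+v\le k+1\}$, these are just $m+1\in T$ and $k-m+1\in T$. The task thus reduces to exhibiting some $a\in T\cap[1,\lfloor(k+1)/2\rfloor]$ with $k+2-a\in T$, after which we take $m:=k+1-a$, automatically lying in $[\lceil(k+1)/2\rceil,k]$.

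Second, I will prove the existence of such an $a$ by contradiction. Suppose no such $a$ exists; then for every $a\in[1,\lfloor(k+1)/2\rfloor]$, at least one of $a,\,k+2-a$ is not in $T$. By consistency, the two statements ``$f(a)\ge k+2-a$'' and ``$f(k+2-a)\ge a$'' are equivalent, so ``$a\notin T$'' and ``$k+2-a\notin T$'' are the same condition. Hence $f(a)\ge k+2-a$ for all $a$ in this range, and consistency then promotes this to $f(v)\ge k+2-v$ for every $v\in[\lceil(k+3)/2\rceil,k+1]$ as well. When $k$ is even, the lone gap at $v=k/2+1$ is closed by applying consistency once more to $f(k/2)\ge k/2+2$. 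Consequently, every pair $\{a,b\}\subset[1,k+1]$ with $a+b\le k+2$ satisfies $f(\min\{a,b\})\ge\max\{a,b\}$, so $\{a,b\}$ is an edge of $G$.

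Third, using this abundance of edges, the alternating zig-zag
\[
(k+1) - 1 - k - 2 - (k-1) - 3 - \cdots
\]
on the vertex set $[k+1]$, in which every consecutive pair of vertices has endpoint sum at most $k+2$, lies entirely in $G$. This path has $k$ edges and is therefore a linear forest with $k$ edges in $G$, contradicting the hypothesis that the maximum linear forest of $G$ has only $k-1$ edges. Hence the required pair $(a,k+2-a)\in T\times T$ exists, and setting $m:=k+1-a\in[\lceil(k+1)/2\rceil,k]$ yields $G\subset H(n,k,m)$. I anticipate the main obstacle to be the parity bookkeeping around the midpoint vertex $k/2+1$ when $k$ is even; the single extra application of consistency described above is the key step that bridges this gap.
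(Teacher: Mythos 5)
Your proof is correct, and it takes a genuinely different route from the paper's. The paper does not argue on $G$ directly: it embeds $G$ in an edge-maximal $\mathcal{L}_{n,k}$-free supergraph, shifts it to obtain $\tilde{G}$, and proves three structural claims about $\tilde{G}$ --- that $[\lceil\frac{k+1}{2}\rceil]$ is a clique (this is where the hypothesis that $G$ actually contains a $(k-1)$-edge linear forest is used), that the complement of the maximum initial clique $[m]$ is independent, and, via the $k$-stability of ``$\mathcal{L}_{n,k}$-free'' (Lemma \ref{ch2:lem3}), that every vertex outside $[m]$ has degree at most $k-m$ --- concluding $G\subseteq\tilde{G}\subseteq H(n,k,m)$. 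You instead encode shiftedness by the threshold function $f$, observe that $G\subseteq H(n,k,m)$ reduces to $f(m+1)\le k-m$ (your second inequality $f(k-m+1)\le m$ is redundant, being implied by the first via symmetry and monotonicity of $f$), and obtain a suitable $m$ by exhibiting the explicit zig-zag path $(k+1),1,k,2,(k-1),3,\dots$ on $[k+1]$ whenever all candidate inequalities fail. This is more elementary and self-contained: it needs neither the auxiliary maximal supergraph nor Lemma \ref{ch2:lem3}, and it uses only the $\mathcal{L}_{n,k}$-freeness of $G$ rather than the full hypothesis --- though you do need $n\ge k+1$ (which is the setting in which the lemma is applied, but should be said, since your path uses the vertex $k+1$; for $n=k$ the conclusion is trivial with $m=k$). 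Two small inaccuracies, neither fatal: the ``gap-closing'' step at $v=k/2+1$ for even $k$ is both imprecise (from $f(k/2)\ge k/2+2$ consistency yields only $f(k/2+1)\ge k/2$, not $\ge k/2+1$, and the latter is genuinely not forced) and unnecessary, because every edge of your zig-zag has its smaller endpoint at most $\lfloor\frac{k+1}{2}\rfloor$, where the bound $f(a)\ge k+2-a$ is already established; and the profile of $H(n,k,m)$ equals $m-1$ at the vertex $m$ rather than $m$, which does not affect the two inequalities you actually use. What the paper's route buys is reuse of existing stability machinery; what yours buys is a shorter, purely combinatorial argument tailored to shifted graphs.
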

\begin{proof}
Let $G'$ be an $\mathcal{L}_{n,k}$-free graph on vertex set $[n]$ with maximum number of edges that containing $G$ as a subgraph. Then apply the shifting operation $S_{ij}$ to $G'$ for all $i,j$ with $1\le i<j\le n$. Finally, we obtain a graph $\tilde{G}$. Since $E(G)\subseteq E(G')$ and $G$ is a shifted graph,
then $G$ is also a subgraph of $\tilde{G}$. By Lemma \ref{ch2:lem1}, $\tilde{G}$ is also $\mathcal{L}_{n,k}$-free.

\begin{claim}
Vertex subset $\lceil\frac{k+1}{2}\rceil$ forms a clique in $\tilde{G}$.
\end{claim}
\begin{proof}
Suppose to the contrary, there exist $u_1,u_2$ in $V(\tilde{G})$ with $1\le u_1<u_2\le \lceil\frac{k+1}{2}\rceil$ such that $\{u_1,u_2\}\notin E(\tilde{G})$.
Since $G$ is a subgraph of $\tilde{G}$, $\tilde{G}$ contains a largest linear forest containing $k-1$ edges. It follows that there exists an edge $\{v_1,v_2\}\in E(\tilde{G})$ such that $v_2>v_1\ge \lceil\frac{k-1}{2}\rceil$. Then by $u_1\le \lceil\frac{k-1}{2}\rceil\le v_1$ and $\{v_1,v_2\}\in E(\tilde{G})$, we have $\{u_1,v_2\}\in E(\tilde{G})$. By $u_2\le \lceil\frac{k+1}{2}\rceil\le v_2$, we have $\{u_1,u_2\}\in E(\tilde{G})$, a contradiction.
Thus, $\{1,2,3,\cdots, \lceil\frac{k+1}{2}\rceil\}$ forms a clique in $\tilde{G}$.
\end{proof}
Let $m$ be the maximum integer such that $[m]$ forms a clique in $\tilde{G}$. Let $A=[m]$ and $B=[n]\setminus A$. If $m\ge k+1$, then we can obtain a path of length $k$, which is a graph in $\mathcal{L}_{n,k}$, a contradiction. Thus, we have $\lceil\frac{k+1}{2}\rceil\le m\le k$.
\begin{claim}
$B$ forms an independent set in $\tilde{G}$.
\end{claim}
\begin{proof}
Suppose to the contrary, there exist $u_1,u_2$ in $V(\tilde{G})$ with $m+1\le u_1<u_2\le n$ such that $\{u_1,u_2\}\in E(\tilde{G})$. Then for any $u\in A$,
since $u\le m<u_1$ and $m+1<u_2$, then $\{u,m+1\}$ is an edge of $\tilde{G}$. It follows that $[m+1]$ forms a clique of $\tilde{G}$, a contradiction.
Thus, the claim holds.
\end{proof}
\begin{claim}
For any vertex $v\in B$, we have $d_{\tilde{G}}(v)\le k-m$.
\end{claim}
\begin{proof}
Suppose to the contrary that $d_{\tilde{G}}(v)\ge k-m+1$ for some $y\in B$. Since $A$ is a maximum clique and $v\notin A$, it follows that there exists some
$u\in A$, such that $\{u,v\}\notin E(\tilde{G})$. Since $\tilde{G}$ is the one with maximum number of edges, we know $\tilde{G}+\{u,v\}$ is not $\mathcal{L}_{n,k}$-free. Since $d_{\tilde{G}}(u)\ge m-1$ and $d_{\tilde{G}}(v)\ge k-m+1$, then $d_{\tilde{G}}(u)+d_{\tilde{G}}(v)\ge k$. By Lemma \ref{ch2:lem3}, we have that $\tilde{G}$ is not $\mathcal{L}_{n,k}$-free, a contradiction.
\end{proof}

Combining all the claims, we conclude that $G$ is a subgraph of $H(n,k,m)$.
\end{proof}

\renewcommand{\proofname}{\bf Proof of Theorem \ref{ch1:th5}.}
\begin{proof}
When $k$ is odd, $K_k$ and $K_{\frac{k-1}{2}}\vee E_{n-\frac{k-1}{2}}$ are $\mathcal{L}_{n,k}$-free graphs with the required
number of $s$-cliques. The number of $s$-cliques is
$$
\begin{aligned}
&max\left\{\binom{k}{s}, \binom{\frac{k-1}{2}}{s}+\bigg(n-\frac{k-1}{2}\bigg)\binom{\frac{k-1}{2}}{s-1}\right\}\\
&=max\left\{\binom{k}{s}, \binom{\frac{k+1}{2}}{s}+\bigg(n-\frac{k+1}{2}\bigg)\binom{\frac{k-1}{2}}{s-1}\right\}.
\end{aligned}
$$
When $k$ is even, $K_k$ and $K_{\frac{k}{2}-1}\vee (E_{n-\frac{k}{2}-1}\cup K_2)$ are $\mathcal{L}_{n,k}$-free graphs with the required
number of $s$-cliques. The number of $s$-cliques is
$$
\begin{aligned}
&max\left\{\binom{k}{s}, \binom{\frac{k}{2}-1}{s}+\bigg(n-\frac{k}{2}+1\bigg)\binom{\frac{k}{2}-1}{s-1}+\binom{\frac{k}{2}-1}{s-2}\right\}\\
&=max\left\{\binom{k}{s}, \binom{\frac{k}{2}+1}{s}+\bigg(n-\frac{k}{2}-1\bigg)\binom{\frac{k}{2}-1}{s-1}\right\}.
\end{aligned}
$$
Therefore we only need to prove the upper bound.
Let $G$ be an $\mathcal{L}_{n,k}$-free graph on vertex set $[n]$ with the maximum number of $s$-cliques. Since adding edges cannot reduce the number of $s$-cliques, we assume
that $G$ is the one with maximum number of edges which is $\mathcal{L}_{n,k}$-free and $\mathcal{N}(G,K_s)$ is maximum. Clearly, we have that $G$ contains a largest linear forest consists of $k-1$ edges.
Otherwise, by adding one edge to $G$, we get a new graph $G'$ with more edges and $G'$ is also $\mathcal{L}_{n,k}$-free, a contradiction. By Lemmas \ref{ch2:lem1} and \ref{ch2:lem2}, we can further assume
$G$ is a shifted graph. Then by Lemma \ref{ch3:lem1}, we obtain that $G$ is a subgraph of $H(n,k,m)$ for some $\lceil\frac{k+1}{2}\rceil\le m\le k$.

If $s>k$, since $G$ is $\mathcal{L}_{n,k}$-free, it follows that $\mathcal{N}(G,K_s)=0$.

If $\lfloor\frac{k-1}{2}\rfloor+2\le s\le k$, we have
$$
\mathcal{N}(G,K_s)\le \mathcal{N}(H(n,k,m),K_s)=\binom{m}{s}\le \binom{k}{s}.
$$

If $2\le s\le \lfloor\frac{k-1}{2}\rfloor+1$, then
$$
\mathcal{N}(G,K_s)\le \mathcal{N}(H(n,k,m),K_s)=\binom{m}{s}+(n-m)\dbinom{k-m}{s-1}.
$$

Let
$$f(m)=\binom{m}{s}+(n-m)\dbinom{k-m}{s-1}.$$

By considering the second derivative, it is easy to check that $f(m)$ is a convex function despite of the parity of $k$.
Since $\lceil\frac{k+1}{2}\rceil\le m\le k$, it follows that
$$
\begin{aligned}
\mathcal{N}(G,K_s)&\le \mathcal{N}(H(n,k,m),K_s)\\
&\le max\left\{f(k),f\bigg(\left\lceil\frac{k+1}{2}\right\rceil\bigg)\right\}\\
&=max\left\{\binom{k}{s}, \binom{\lceil\frac{k+1}{2}\rceil}{s}+\bigg(n-\left\lceil\frac{k+1}{2}\right\rceil\bigg)\binom{\lfloor\frac{k-1}{2}\rfloor}{s-1}\right\}.
\end{aligned}
$$
Combining all the cases, we obtain that for $s\ge 2$ and $n\ge k+1$,
\begin{equation*}
ex(n,K_s,\mathcal{L}_{n,k})\le \mathcal{N}(G,K_s)\le max\left\{\binom{k}{s}, \binom{\lceil\frac{k+1}{2}\rceil}{s}+\bigg(n-\left\lceil\frac{k+1}{2}\right\rceil\bigg)\binom{\lfloor\frac{k-1}{2}\rfloor}{s-1}\right\}.
\end{equation*}
This complete our proof.
\end{proof}

\renewcommand{\proofname}{\bf Proof of Theorem \ref{ch1:th6}.}
\begin{proof}
When $k$ is odd, $K_k$ and $K_{\frac{k-1}{2}}\vee E_{n-\frac{k-1}{2}}$ are $\mathcal{L}_{n,k}$-free graphs with the required
number of $s$-cliques. The number of $s$-cliques is

\begin{align}
\nonumber max&\left\{\binom{k}{s+t}\binom{s+t}{t}, \binom{\frac{k-1}{2}}{s}\binom{n-s}{t}+\bigg(n-\frac{k-1}{2}\bigg)\binom{\frac{k-1}{2}}{s+t-1}\binom{s+t-1}{t}\right\}\\
\nonumber &=max\left\{{\binom{k}{s+t}\binom{s+t}{t}, \binom{\frac{k-1}{2}}{s}\binom{n-s}{t}+\bigg(n-\frac{k+1}{2}\bigg)\binom{\frac{k-1}{2}}{s-1}\binom{\frac{k-1}{2}-s+1}{t}}\right.\\
\nonumber &\left.{+\bigg(\binom{\frac{k+1}{2}}{s}-\binom{\frac{k-1}{2}}{s}\bigg)\binom{\frac{k+1}{2}-s}{t}}\right\}.
\end{align}
When $k$ is even, $K_k$ and $K_{\frac{k}{2}-1}\vee (E_{n-\frac{k}{2}-1}\cup K_2)$ are $\mathcal{L}_{n,k}$-free graphs with the required
number of $s$-cliques. The number of $s$-cliques is
\begin{align}
\nonumber max&\left\{{\binom{k}{s+t}\binom{s+t}{t},\binom{\frac{k}{2}-1}{s}\binom{n-s}{t}+\bigg(n-\frac{k}{2}+1\bigg)\binom{\frac{k}{2}-1}{s+t-1}\binom{s+t-1}{t}}\right.\\
\nonumber &\left.{+\binom{\frac{k}{2}-1}{s}\binom{\frac{k}{2}-1-s}{t-2}}+\binom{\frac{k}{2}-1}{s-1}\binom{\frac{k}{2}-s}{t-1}+\binom{\frac{k}{2}-1}{s-2}\binom{\frac{k}{2}+1-s}{t}\right\}\\
\nonumber &=max\left\{{\binom{k}{s+t}\binom{s+t}{t}, \binom{\frac{k}{2}-1}{s}\binom{n-s}{t}+\bigg(n-\frac{k}{2}-1\bigg)\binom{\frac{k}{2}-1}{s-1}\binom{\frac{k}{2}-1-s+1}{t}}\right.\\
\nonumber &\left.{+\bigg(\binom{\frac{k}{2}+1}{s}-\binom{\frac{k}{2}-1}{s}\bigg)\binom{\frac{k}{2}+1-s}{t}}\right\}.
\end{align}
Therefore we only need to prove the upper bound.
Let $G$ be an $\mathcal{L}_{n,k}$-free graph on vertex set $[n]$ with the maximum number of copies of $K^*_{s,t}$. Since adding edges cannot reduce the number of copies of $K^*_{s,t}$,
we assume $G$ is the one with maximum number of edges which is $\mathcal{L}_{n,k}$-free and $\mathcal{N}(G,K^*_{s,t})$ is maximum. By Lemmas \ref{ch2:lem1} and \ref{ch2:lem2}, we can further assume $G$ is a shifted graph. Then by Lemma \ref{ch3:lem1}, we obtain that $G$ is a subgraph of $H(n,k,m)$ for some $\lceil\frac{k+1}{2}\rceil\le m\le k$.

Let $\Psi_m(K^*_{s,t})$ be the set of all $K^*_{s,t}$ in $H(n,k,m)$, $i.e.$,
\begin{align}
\nonumber\Psi_m(K^*_{s,t})=&\left\{{(W_1,W_2):|W_1|=s,|W_2|=t \mbox{ and } (W_1,W_2)}\mbox{ forms }\right.\\
\nonumber &\left.{\mbox{ a copy of } K^*_{s,t} \mbox{ in } H(n,k,m)}\right\}.
\end{align}
Let $U=[m]$, $U_0=[k-m]$ and $U'=[n]\setminus U$. Now we enumerate the copies of $K^*_{s,t}$ in $H(n,k,m)$ by classifying $\Psi_m(K^*_{s,t})$ into three classes as follows:
$$
\begin{cases}
\Psi_1=\left\{(W_1,W_2)\in \Psi_m(K^*_{s,t}):W_1\subset U_0\right\};\\
\Psi_2=\left\{(W_1,W_2)\in \Psi_m(K^*_{s,t}):W_1\cap U'\neq \phi\right\};\\
\Psi_3=\left\{(W_1,W_2)\in \Psi_m(K^*_{s,t}):W_1\cap (U\setminus U_0)\neq \phi\right\}.
\end{cases}
$$
For the first class, since there are $\binom{k-m}{s}$ ways to choose $W_1$ and $\binom{n-s}{t}$ ways to choose $W_2$, it follows that
$$
|\Psi_1|=f_1(m)=\binom{k-m}{s}\binom{n-s}{t}.
$$
For the second class, since $U'$ is an independent set, there is exactly one vertex in $U'$ belonging to $W_1$ and all the other vertices in $W_1\cup W_2$ are contained in $U_0$.
It follows that
$$
|\Psi_2|=f_2(m)=(n-m)\binom{k-m}{s-1}\binom{k-m-s+1}{t}.
$$
For the third class, there are $\binom{m}{s}-\binom{k-m}{s}$ choices for $W_1$ and $\binom{m-s}{t}$ choices for $W_2$.
Thus, we have
$$
|\Psi_3|=f_3(m)=\bigg(\binom{m}{s}-\binom{k-m}{s}\bigg)\binom{m-s}{t}.
$$
By considering the second derivative, we have that $f_1(m)$, $f_2(m)$ and $f_3(m)$ are all convex function in $m$.

Let $f(m)=f_1(m)+f_2(m)+f_3(m).$
Then $\mathcal{N}(H(n,k,m),K^*_{s,t})=f(m)$ and $f(m)$ is a convex function in $m$. Thus, we have
\begin{align}
\nonumber \mathcal{N}(G,K^*_{s,t})&\le \mathcal{N}(H(n,k,m),K^*_{s,t})\\
\nonumber &\le max\left\{f(k),f\bigg(\left\lceil\frac{k+1}{2}\right\rceil\bigg)\right\}\\
\nonumber &\le max\left\{{\dbinom{k}{s}\dbinom{k-s}{t}, \dbinom{\left\lfloor\frac{k-1}{2}\right\rfloor}{s}\dbinom{n-s}{t}+}\right.\\
\nonumber &\left.{\bigg(n-\left\lceil\frac{k+1}{2}\right\rceil\bigg)\dbinom{\left\lfloor\frac{k-1}{2}\right\rfloor}{s-1}\dbinom{\left\lfloor\frac{k-1}{2}\right\rfloor-s+1}{t}+}\right.\\
\nonumber &\left.{\bigg(\dbinom{\left\lceil\frac{k+1}{2}\right\rceil}{s}-\dbinom{\left\lfloor\frac{k-1}{2}\right\rfloor}{s}\bigg)\dbinom{\left\lceil\frac{k+1}{2}\right\rceil-s}{t}}\right\}.
\end{align}
Thus, we complete the proof.
\end{proof}
\section{The generalized Tur\'{a}n number of spanning linear forests on bipartite case}\label{se4}
\quad\quad In this section, we determine the exact value of $ex_{bip}(n,K_{s,t},\mathcal{L}_{n,k})$. Consider the bipartite graph which is $\mathcal{L}_{n,k}$-free, we prove the following lemma.
\begin{lemma}\label{ch4:lem1}
Let $G$ be a bipartite graph with each part of equal size $n$. If $G$ is the one with maximum number of edges which is $\mathcal{L}_{n,k}$-free, then there
exists a subset $T$ of the vertices with $|T|=\lceil\frac{k-1}{2}\rceil$, such that all edges of $G$ are incident to at least one vertex of $T$.
\end{lemma}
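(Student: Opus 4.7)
The plan is to bound the matching number $\nu(G)$ by $\ell:=\lceil\frac{k-1}{2}\rceil$ and then invoke K\"onig--Egerv\'ary: in a bipartite graph this yields a vertex cover of size $\nu(G)\le\ell$, which can be padded by arbitrary extra vertices to produce the required $T$ of size exactly $\ell$ covering every edge of $G$.

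First I would apply the shifting operation of Section~\ref{se2}, restricted to swaps within a single part of the bipartition. Such within-part shifts preserve the bipartite structure and, by Lemma~\ref{ch2:lem1} (whose case analysis is purely structural and adapts to graphs on $[2n]$ when the swapped indices lie in the same part), both the $\mathcal{L}_{n,k}$-freeness and the edge count. Since $G$ already has the maximum edge count among bipartite $\mathcal{L}_{n,k}$-free graphs with parts of size $n$, the shifted graph still has this maximum. Relabeling $X=\{x_1,\dots,x_n\}$ and $Y=\{y_1,\dots,y_n\}$, I may therefore assume $d(x_1)\ge d(x_2)\ge\cdots$ and $N(x_i)=\{y_1,\dots,y_{d(x_i)}\}$ for every $i$, with symmetric statements on the $Y$ side.

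Next I would establish $\nu(G)\le\ell$. Suppose for contradiction $m:=\nu(G)\ge\ell+1$. In a bipartite-shifted graph a maximum matching is realized by $\{x_iy_i:1\le i\le m\}$, so $d(x_i)\ge i$ for every $i\le m$. In particular, for each $1\le i\le m-1$ one has $d(x_{i+1})\ge i+1>i$, and therefore $x_{i+1}y_i\in E(G)$. Consequently the path
\[
 P \;=\; x_1y_1\,x_2y_2\cdots x_my_m
\]
is contained in $G$ and has $2m-1$ edges. A parity check gives $2m-1\ge k$ in both cases: for $k$ odd, $2\ell+1=k$, and for $k$ even, $2\ell+1=k+1$. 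So $G$ contains $P_{k+1}$, and padding with $n-(k+1)$ isolated vertices of $V(G)$ produces a member of $\mathcal{L}_{n,k}$ inside $G$, contradicting $\mathcal{L}_{n,k}$-freeness. Hence $\nu(G)\le\ell$, completing the argument.

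The main obstacle is legitimizing the bipartite-shifted reduction. Lemma~\ref{ch2:lem1} is stated for a graph on $[n]$ with arbitrary index swaps, so one has to verify that restricting to within-part swaps keeps the graph bipartite and leaves the linear-forest case analysis of Lemma~\ref{ch2:lem1} intact; both properties hold because the forbidden configurations are treated purely combinatorially. The argument also implicitly uses $n\ge k+1$ so that the padding step is available; for smaller $n$ the family $\mathcal{L}_{n,k}$ is empty and the lemma requires a separate (essentially trivial) check in the only regime where it is not vacuous.
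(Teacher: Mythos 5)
Your reduction ``I may therefore assume $G$ is bipartite-shifted'' is the genuine gap. The lemma asserts the existence of a cover of size $\lceil\frac{k-1}{2}\rceil$ for an \emph{arbitrary} edge-maximum $\mathcal{L}_{n,k}$-free bipartite graph $G$, and this conclusion does not transfer back from the shifted graph $\tilde G$ to $G$: a vertex cover of $\tilde G$ is not a cover of $G$ (the edge sets differ), and the equivalent bound $\nu(G)\le\lceil\frac{k-1}{2}\rceil$ does not follow from $\nu(\tilde G)\le\lceil\frac{k-1}{2}\rceil$, because the shifting operation can strictly decrease the matching number. For instance, with parts $X=\{1,2\}$, $Y=\{3,4\}$ and $E(G)=\{\{2,3\},\{1,4\}\}$, the within-part shift $S_{12}$ yields the star with edges $\{1,3\},\{1,4\}$, so $\nu$ drops from $2$ to $1$, while edge count and $\mathcal{L}_{n,k}$-freeness are preserved. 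This is precisely why the paper does not shift in Section \ref{se4}: it proves $\nu(G)\le\lceil\frac{k-1}{2}\rceil$ directly for the edge-maximum $G$, by playing the lower bound on $e(G)$ coming from the extremal constructions against a counting argument built on a maximum matching of $G$ itself. Note also that proving the statement only for shifted graphs would not rescue the application in Theorem \ref{ch1:th7}, where the lemma is invoked for a $K_{s,t}$-extremal graph that has not been (and, absent a bipartite analogue of Lemma \ref{ch2:lem2} for $K_{s,t}$-counts, cannot simply be) replaced by a shifted one.

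A second, more local error: even in a graph shifted within both parts it is not true that a maximum matching is realized by the diagonal $\{x_iy_i:1\le i\le m\}$, i.e.\ that $d(x_i)\ge i$ for all $i\le m$; the graph with edges $x_1y_1,x_1y_2,x_2y_1$ is shifted on both sides and has $\nu=2$, yet $d(x_2)=1$. This part is repairable: shiftedness gives $d(x_t)\ge m-t+1$ for every $t\le m$ (otherwise all of $x_t,\dots,x_n$ have neighbourhoods inside $\{y_1,\dots,y_{m-t}\}$ and no matching of size $m$ exists), and then $y_mx_1y_{m-1}x_2\cdots y_1x_m$ is a path with $2m-1$ edges, which yields the desired contradiction exactly as in your parity check. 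Your observations that within-part shifts preserve bipartiteness, edge count and $\mathcal{L}_{n,k}$-freeness, and that $n\ge k+1$ is implicitly needed, are fine; but with only these ingredients your argument establishes the lemma for shifted extremal graphs, and the missing transfer to an arbitrary extremal $G$ is the substance of the statement.
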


The following version of the K\"{o}nig-Hall Theorem will be used in our proof.
\begin{theorem}[\cite{Lovasz}]\label{ch4:th1}
Let $G$ be a bipartite graph with $\nu(G)=k$. Then there exists a subset $T$ of the vertices with $|T|=k$, such that all edges of $G$ are incident to at least one vertex of $T$.
\end{theorem}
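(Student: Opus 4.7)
The plan is to reduce Lemma \ref{ch4:lem1} to the purely combinatorial bound $\nu(G)\le\lceil(k-1)/2\rceil$, from which the desired vertex cover follows by Theorem \ref{ch4:th1}; if König's theorem returns a cover of size strictly smaller than $\lceil(k-1)/2\rceil$, we just add arbitrary vertices of $G$ to enlarge it to the exact size required.

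First, I would exploit the maximality hypothesis in the same spirit as the earlier sections. Because $G$ is $\mathcal{L}_{n,k}$-free with the largest possible number of edges, for every non-edge $uv$ of $G$ the augmented graph $G+uv$ must fail to be $\mathcal{L}_{n,k}$-free. Lemma \ref{ch2:lem3}, read contrapositively, then forces $d_G(u)+d_G(v)\le k-1$ for every non-edge $uv$. Since $G$ is bipartite with parts $A$ and $B$, any two distinct vertices inside the same part are automatically non-adjacent, so this inequality applies to every such pair; consequently at most one vertex of $A$ and at most one vertex of $B$ has degree exceeding $\lfloor(k-1)/2\rfloor$.

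Next, I would argue $\nu(G)\le\lceil(k-1)/2\rceil$ by contradiction. Suppose a maximum matching $M=\{u_iv_i:1\le i\le m\}$ with $u_i\in A$, $v_i\in B$ satisfies $m\ge\lceil(k+1)/2\rceil$. The idea is to build an explicit linear forest with exactly $k$ edges and contradict the $\mathcal{L}_{n,k}$-free property. I would piece together an $M$-alternating path of the form $v_{i_1}u_{i_1}v_{i_2}u_{i_2}\cdots v_{i_s}u_{i_s}$, which uses $s$ matching edges together with $s-1$ cross-edges $u_{i_j}v_{i_{j+1}}\in E(G)$ and supplies $2s-1$ path edges, and then append the $m-s$ unused matching edges as isolated path components. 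The resulting linear forest has $(2s-1)+(m-s)=m+s-1$ edges, so taking $s=k-m+1$ gives exactly $k$ edges; the inequality $m\ge\lceil(k+1)/2\rceil$ ensures $s\le m$, so there are enough matching edges to support the alternating path. The degree bound of the previous step is then used to locate the required cross-edges $u_iv_j$: since all but one matched vertex on each side has degree at most $\lfloor(k-1)/2\rfloor$, the forbidden non-edges $u_iv_j$ cannot block every attempt to thread the alternating path. A short case split on the parity of $k$ handles the slight asymmetry between $\lceil(k+1)/2\rceil$ and $\lfloor(k-1)/2\rfloor$ in the two parities.

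The main obstacle is this last geometric step: converting the non-edge degree-sum inequality into the concrete existence of an $M$-alternating path of the prescribed length $s$ inside the matched vertices. The key leverage is that matched endpoints on the same side are pairwise non-adjacent, so their degrees sum to at most $k-1$, which severely restricts where non-edges can sit among the bipartite cross-pairs $\{u_i,v_j\}$; a careful counting/pigeonhole argument then forces the cross-edges needed to realise the alternating path. Once the $k$-edge linear forest is produced, the resulting contradiction with $\mathcal{L}_{n,k}$-freeness yields $\nu(G)\le\lceil(k-1)/2\rceil$, and Theorem \ref{ch4:th1} together with the padding remark completes the proof.
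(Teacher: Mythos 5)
There is a fundamental mismatch here: the statement you were asked to prove is Theorem \ref{ch4:th1}, which is the K\"{o}nig--Hall theorem (in a bipartite graph there is a vertex cover whose size equals the maximum matching number), a classical result that the paper simply cites from \cite{Lovasz} and does not prove. Your proposal does not prove this statement; it sets out to prove Lemma \ref{ch4:lem1} and invokes Theorem \ref{ch4:th1} as a black box in its very first sentence. As a proof of the stated theorem it is therefore circular and, in substance, empty: nothing in your argument establishes that a bipartite graph $G$ with $\nu(G)=k$ admits a cover of size $k$. If the target really is Theorem \ref{ch4:th1}, you need an independent argument (augmenting paths, or the standard derivation from Hall's theorem); none of the machinery you deploy (edge-maximality, Lemma \ref{ch2:lem3}, alternating paths inside a fixed maximum matching) is aimed at that goal.

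Even read charitably as a proof of Lemma \ref{ch4:lem1}, the argument has a genuine gap. From edge-maximality you deduce $d_G(u)+d_G(v)\le k-1$ for \emph{every} non-adjacent pair, ``including pairs inside the same part.'' But $G$ is edge-maximal within the class of bipartite graphs on the fixed parts, so for $u,v$ in the same part the graph $G+uv$ leaves that class and maximality gives no information; Lemma \ref{ch2:lem3} cannot be applied there. The consequence you extract --- at most one vertex per side has degree exceeding $\lfloor (k-1)/2\rfloor$ --- is in fact false: the extremal graph $K_{\frac{k-1}{2},n}\cup E_{n-\frac{k-1}{2}}$ (for odd $k\ge 5$) is edge-maximal $\mathcal{L}_{n,k}$-free and has $\frac{k-1}{2}$ vertices of degree $n$ all on one side. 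Since the ``counting/pigeonhole'' step that is supposed to thread the alternating path is only sketched and rests entirely on this false degree bound, the contradiction is never actually reached. For comparison, the paper's proof of the claim $\nu(G)\le\lceil\frac{k-1}{2}\rceil$ in Lemma \ref{ch4:lem1} avoids all of this: it bounds $e(G)$ from above via the structure around a maximum matching and plays that against the lower bounds $e(G)\ge\frac{k-1}{2}n$ (odd $k$) and $e(G)\ge\frac{k}{2}n-\frac{k}{2}+1$ (even $k$) coming from the explicit constructions.
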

In the following, we give the proof of Lemma \ref{ch4:lem1} mainly by Theorem \ref{ch4:th1}.
\renewcommand{\proofname}{\bf Proof of Lemma \ref{ch4:lem1}}
\begin{proof}
By Theorem \ref{ch4:th1}, we only need to show that $\nu(G)=\lceil\frac{k-1}{2}\rceil$.
Since $G$ is the one with maximum number of edges which is $\mathcal{L}_{n,k}$-free, it follows that $G$ is not $\mathcal{L}_{n,k-1}$-free. Otherwise, by adding one edge to $G$, we get a new graph $G'$
with more edges which is $\mathcal{L}_{n,k}$-free, a contradiction.

When $k$ is odd, we can find a graph $H=K_{\frac{k-1}{2},n}\cup E_{n-\frac{k-1}{2}}$ which is $\mathcal{L}_{n,k}$-free and $\nu(H)=\frac{k-1}{2}$.
It follows that $\nu(G)\ge \frac{k-1}{2}$ and $e(G)\ge \frac{k-1}{2} n$ for an odd $k$. When $k$ is even, let $H_1=K_{\frac{k}{2}-1,n}$, $H_2=K_{1,n-\frac{k}{2}+1}$
and $H'$ is the union of $H_1$ and $H_2$ which the centre vertex of $H_2$ is same as some vertex from the vertex partite set of size $n$ in $H_1$. It is easy to know that the graph $H'$ is
$\mathcal{L}_{n,k}$-free and $\nu(H')=\frac{k}{2}$. It follows that $\nu(G)\ge \frac{k}{2}$ and $e(G)\ge \frac{k}{2} n-\frac{k}{2}+1$ for an even $k$.
Thus, we have $\nu(G)\ge \lceil\frac{k-1}{2}\rceil$.
We give the claim as follows.
\begin{claim}
$\nu(G)\le \lceil\frac{k-1}{2}\rceil$.
\end{claim}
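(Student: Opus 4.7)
The plan is to prove $\nu(G)\le \lceil(k-1)/2\rceil$ by contradiction. Assume $\nu(G)=m\ge \lceil(k+1)/2\rceil$, so that $2m-1\ge k$, and fix a maximum matching $M=\{u_1v_1,\ldots,u_mv_m\}$ in $G$ with $u_i\in A$, $v_i\in B$. Since $M$ is maximum, no edge of $G$ joins two unmatched vertices, hence every edge of $G$ is incident to $V(M)$ and every unmatched vertex has all its neighbours inside $V(M)$.

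I would view $M$ itself as a linear forest with $m$ components, each a $P_2$, and aim to perform $m-1$ pairwise disjoint ``$+1$-edge moves'' on it, producing a linear forest with $m+(m-1)=2m-1\ge k$ edges and thereby contradicting that $G$ is $\mathcal{L}_{n,k}$-free. Two kinds of moves are available: an \emph{external extension} attaches an unmatched vertex $w$ adjacent to $u_i$ or $v_i$ and turns $u_iv_i$ into a $P_3$ (consuming one matching edge and one outside vertex); an \emph{internal chain} uses a non-matching edge $u_iv_j$ with $i\ne j$ inside $G[V(M)]$ to merge $u_iv_i$ and $u_jv_j$ into a single $P_4$ (consuming two matching edges).

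To force $m-1$ such disjoint moves I would exploit two structural inputs: by the extremality of $G$ and Lemma \ref{ch2:lem3}, every non-edge $xy$ of $G$ satisfies $d_G(x)+d_G(y)\le k-1$; and by the explicit construction presented just before the claim, $e(G)\ge e(H)$, which makes $G$ edge-rich. For each $i$ let $E_i=(N_G(u_i)\cup N_G(v_i))\setminus V(M)$. Finding many pairwise-disjoint external extensions is a system-of-distinct-representatives question, and by the defect version of Hall's theorem it reduces to verifying $|\bigcup_{i\in S}E_i|\ge|S|-1$ for every $S\subseteq\{1,\ldots,m\}$. When this condition holds, $m-1$ external extensions suffice and the proof is complete.

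The hardest step, and the main obstacle I expect, is the opposite case: some $S$ with $|\bigcup_{i\in S}E_i|<|S|-1$. Then the matched vertices $\{u_i,v_i:i\in S\}$ have almost no unmatched neighbours, and Lemma \ref{ch2:lem3} applied to the non-edges between them and the unmatched side forces the non-matching edges of $G[V(M)]$ on $S$ to be numerous. These non-matching edges furnish the internal chains needed to compensate for the shortfall in external extensions. A careful tally, balancing external extensions against internal chains---each giving $+1$ edge but the latter consuming two matching edges---still produces at least $m-1$ disjoint moves, so the resulting linear forest has at least $k$ edges, contradicting $\mathcal{L}_{n,k}$-freeness of $G$ and completing the proof of the claim.
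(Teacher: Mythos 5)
There is a genuine gap, and it sits exactly where you place ``the hardest step.'' Your plan has two branches: (a) Hall's condition with deficiency $1$ holds for the sets $E_i$, in which case a partial system of distinct representatives gives $m-1$ disjoint pendant extensions and a linear forest with $2m-1\ge k$ edges; (b) Hall's condition fails, in which case you assert that Lemma \ref{ch2:lem3} forces many non-matching edges inside $G[V(M)]$ and that ``a careful tally'' of internal chains compensates the shortfall. Branch (a) is fine \emph{if} the condition holds, but neither branch is actually established, and branch (b) cannot work as described. Count within your own accounting: with $a$ external extensions and $b$ internal chains, pairwise disjointness forces $a+2b\le m$, while you need $a+b\ge k-m$ added edges. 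In the tight case $k$ odd and $m=\lceil\frac{k+1}{2}\rceil$ one has $k-m=m-1$, so $b\le 1$: at most \emph{one} internal chain is affordable, since each one buys one edge at the cost of two matching edges. Hence internal chains cannot compensate a Hall deficiency of two or more; to rescue the idea you would need longer alternating-path structures (chaining several matching edges through several non-matching edges) together with a genuine counting argument showing such structures exist, and that is precisely the content you have deferred. As written, the proof of the claim reduces to an unproved assertion.

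You also never use, in any quantitative way, the fact that enters decisively in the paper's proof: the extremality of $G$ gives the lower bounds $e(G)\ge \frac{k-1}{2}n$ ($k$ odd) and $e(G)\ge \frac{k}{2}n-\frac{k}{2}+1$ ($k$ even) coming from the explicit constructions. The paper's argument is an edge count: assuming $\nu(G)=x\ge\lceil\frac{k+1}{2}\rceil$, it shows that only fewer than $\lceil\frac{k-1}{2}\rceil$ matched vertices can have neighbours outside $V(M_x)$ (otherwise a $k$-edge linear forest appears), so the edges leaving $V(M_x)$ number at most $(\lceil\frac{k-1}{2}\rceil-1)(n-x)$, and that $G[V(M_x)]$ must be far from complete bipartite because $K_{\lceil\frac{k+1}{2}\rceil,\lceil\frac{k-1}{2}\rceil}$ contains $P_{k+1}$; the resulting upper bound on $e(G)$ falls below the lower bound, a contradiction. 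Your structural route avoids this edge count but replaces it with a matching/augmentation argument whose key case is missing, so it is not currently a proof. A minor further point: Lemma \ref{ch2:lem3} is stated for a graph on $n$ vertices with the family $\mathcal{L}_{n,k}$, whereas your host is bipartite on $2n$ vertices; the stability transfers, but you should justify (or at least remark on) that step rather than cite the lemma verbatim.
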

\renewcommand{\proofname}{Proof.}
\begin{proof}
Suppose to the contrary, we have $\nu(G)\ge \lceil\frac{k+1}{2}\rceil$. Let $G=G[U,V]$ be a bipartite graph, where $|U|=|V|=n$. Let $U=\{u_1,u_2,\cdots,u_n\}$ and $V=\{v_1,v_2,\cdots,v_n\}$.
Assume $\nu(G)=x$ and $M_x=\bigcup_{i=1}^x u_iv_i$ is a maximum matching of $G$. Note that $\lceil\frac{k+1}{2}\rceil\le x\le k-1$.
It follows from $M_x$ is a maximum matching of $G$ that there exists no edge in $G[V(G)\setminus V(M_x)]$.
It follows from $G$ is a bipartite graph that $u_i$ and $v_i$ cannot be adjacent to same vertex in $V(G)\setminus V(M_x)$, where $u_iv_i\in M_x$.
For some edge $u_iv_i\in M_x$, $u_i$ and $v_i$ cannot be adjacent to distinct vertices in $V(G)\setminus V(M_x)$ at the same time.
Otherwise, if the vertex $u_{i'}$ is adjacent to $v_{i''}$ and $v_{i'}$ is adjacent to $u_{i''}$ for $u_iv_i\in M_x$ and $u_{i''}, v_{i''}\in V(G)\setminus V(M_x)$, then
by substituting the edge $u_{i'}v_{i'}$ with the two edges $u_{i'}v_{i''}$ and $u_{i''}v_{i'}$, we can find a copy of matching with more edges than $M_x$, a contradiction.
We denote the subset of vertices of $V(M_x)$ whose each vertex is adjacent to at least one vertex in $V(G)\setminus V(M_x)$ by $U$.
Since $G$ is $\mathcal{L}_{n,k}$-free, it follows that $|U|< \lceil\frac{k-1}{2}\rceil$. Otherwise, we can find a copy of $\mathcal{L}_{n,k}$, a contradiction.
Thus, there exist at most $|U|(n-x)$ edges between the vertex set $V(M_x)$ and vertex set $V\setminus M_x$.
Note that the complete bipartite graph $K_{\lceil\frac{k+1}{2}\rceil,\lceil\frac{k-1}{2}\rceil}$ contains a path $P_{k+1}$. Therefore there exist no such subgraph in $G$.
It follows that $G[S_1,S_2]$ cannot be a copy of $K_{\lceil\frac{k+1}{2}\rceil,\lceil\frac{k-1}{2}\rceil}$, where $S_1\subset U, S_2\subset V$, $|S_1|=\lceil\frac{k+1}{2}\rceil$ and
$|S_2|=\lceil\frac{k-1}{2}\rceil$. In order to make $G[S_1,S_2]$ do not contain a path $P_{k+1}$, we need to delete at least $\lceil\frac{k-1}{2}\rceil$ edges in $G[S_1,S_2]$.
There are $2\binom{x}{\lceil\frac{k+1}{2}\rceil}\binom{x}{\lceil\frac{k-1}{2}\rceil}$ copies of such subgraph. It follows that there are at most $x^2-2\binom{x}{\lceil\frac{k+1}{2}\rceil}\binom{x}{\lceil\frac{k-1}{2}\rceil}\lceil\frac{k-1}{2}\rceil$ edges in $G[M_x]$.
Thus, the number of edges of $G$ suffices the following condition.
\begin{align}
\nonumber e(G)&< |U|(n-x)+x^2-2\binom{x}{\lceil\frac{k+1}{2}\rceil}\binom{x}{\lceil\frac{k-1}{2}\rceil}\left\lceil\frac{k-1}{2}\right\rceil\\
\nonumber &\le \bigg(\left\lceil\frac{k-1}{2}\right\rceil-1\bigg)(n-x)+x^2-2\binom{x}{\lceil\frac{k+1}{2}\rceil}\binom{x}{\lceil\frac{k-1}{2}\rceil}\left\lceil\frac{k-1}{2}\right\rceil\\
\nonumber &\le \bigg(\left\lceil\frac{k-1}{2}\right\rceil-1\bigg)(n-x)+x^2-2\left\lceil\frac{k-1}{2}\right\rceil\left\lceil\frac{k+1}{2}\right\rceil\\
\nonumber &<\bigg(\left\lceil\frac{k-1}{2}\right\rceil-1\bigg) n,
\end{align}
which contradicts with the facts $e(G)\ge \lceil\frac{k-1}{2}\rceil n$ for an odd $k$ and $e(G)\ge \lceil\frac{k-1}{2}\rceil n-\frac{k}{2}+1$ for an even $k$.
\end{proof}
It follows that $\nu(G)=\frac{k}{2}$. Thus, the lemma holds.
\end{proof}

\renewcommand{\proofname}{\bf Proof of Theorem \ref{ch1:th7}}
\begin{proof}
When $k$ is odd, the bipartite graph $K_{\frac{k-1}{2},n}\cup E_{n-\frac{k-1}{2}}$ is the one with the required number of $K_{s,t}$.
When $k$ is even, let $H'$ be the union of $H_1=K_{\frac{k}{2}-1,n}$ and $H_2=K_{1,n-\frac{k}{2}+1}$ which the centre vertex of $H_2$ is same as some vertex from the vertex partite set of size $n$ in $H_1$.
Then $H'$ is the one with the required number of $K_{s,t}$.

Thus, we only need to prove the upper bound.
Let $G$ be a bipartite graph which contain two vertex sets $X$ and $Y$ with equal size $n$. Suppose that $G$ is the one with the maximum number of copies of $K_{s,t}$ which is $\mathcal{L}_{n,k}$-free.
We further assume that $G$ is the one with maximum number of edges which is
$\mathcal{L}_{n,k}$-free and $\mathcal{N}(G,K_{s,t})$ is maximum. Then by Lemma \ref{ch4:lem1}, there exists a subset $T\subset X\cup Y$ with $|T|=\lceil\frac{k-1}{2}\rceil$, such that all edges of $G$ are adjacent to
at least one vertex of $T$.

Let $X_1=X\cap T$, $Y_1=Y\cap T$, $X_2=X\setminus T$ and $Y_2=Y\setminus T$. Define $G^*$ to be a bipartite graph on vertex sets $X$ and $Y$ so that $G^*[X_1,Y]$ and $G^*[X,Y_1]$ are complete bipartite graphs
and $G^*[X_2,Y_2]$ is an empty graph. It is easy to see that $G$ ia a subgraph of $G^*$. It follows that $\mathcal{N}(G,K_{s,t})\le \mathcal{N}(G^*,K_{s,t})$.

Let $(S,T)$ be an ordered pair such that $S\subset X$ and $T\subset Y$ with $|S|=s$ and $|T|=t$, and $G^*[S,T]$ is a complete bipartite graph. Clearly, each copy of $K_{s,t}$ in $G^*$ is identified by such an ordered pair.
Since $G^*[X_2,Y_2]$ is an empty graph, it follows that at least one of $S\subset X_1$ and $T\subset Y_1$ holds. Let $|X_1|=x$. Since $|X_1|+|Y_1|=|T|=\lceil\frac{k-1}{2}\rceil$, it follows that $|Y_1|=\lceil\frac{k-1}{2}\rceil-x$.
Thus, we have
$$
\mathcal{N}(G^*,K_{s,t})=\dbinom{x}{s}\dbinom{n}{t}+\dbinom{n}{s}\dbinom{\lceil\frac{k-1}{2}\rceil-x}{t}-\dbinom{x}{s}\dbinom{\lceil\frac{k-1}{2}\rceil-x}{t}.
$$
Let
$$
f_{s,t}(x)=\dbinom{x}{s}\dbinom{n}{t}+\dbinom{n}{s}\dbinom{\lceil\frac{k-1}{2}\rceil-x}{t}-\dbinom{x}{s}\dbinom{\lceil\frac{k-1}{2}\rceil-x}{t}.
$$
By considering the second derivative, it is easy to check that $f_{s,t}(x)$ is a convex function.
\begin{case}
$k$ is odd.
\end{case}
For $s=t$, we have
$$
\mathcal{N}(G,K_{s,s})\le \mathcal{N}(G^*,K_{s,s})\le max\left\{f_{s,s}(0),f_{s,s}\bigg(\frac{k-1}{2}\bigg)\right\}=\dbinom{\frac{k-1}{2}}{s}\dbinom{n}{s}.
$$
For $s\neq t$, since $f_{s,t}(x)+f_{t,s}(x)$ is also a convex function, we have
\begin{align}
\nonumber\mathcal{N}(G,K_{s,t})&\le \mathcal{N}(G^*,K_{s,t})\\
\nonumber &\le max\left\{f_{s,t}(0)+f_{t,s}(0),f_{s,t}\bigg(\frac{k-1}{2}\bigg)+f_{t,s}\bigg(\frac{k-1}{2}\bigg)\right\}\\
\nonumber &=\dbinom{\frac{k-1}{2}}{s}\dbinom{n}{t}+\dbinom{\frac{k-1}{2}}{t}\dbinom{n}{s}.
\end{align}
\begin{case}
$k$ is even.
\end{case}
In this case, it is easy to see that $f_{s,t}(x)$ attains its maximum value at $x=1$ or $x=\frac{k}{2}-1$. Otherwise, we can find a copy of $P_{k+1}$ in $G^*$, a contradiction.
For $s=t$, we have
\begin{align}
\nonumber\mathcal{N}(G,K_{s,s})&\le \mathcal{N}(G^*,K_{s,s})\le max\left\{f_{s,s}(1),f_{s,s}\bigg(\frac{k}{2}-1\bigg)\right\}\\
\nonumber &=\dbinom{1}{s}\dbinom{n}{s}+\dbinom{\frac{k}{2}-1}{s}\dbinom{n}{s}-\dbinom{1}{s}\dbinom{\frac{k}{2}-1}{s}
\end{align}
If $s=1$, then
$$
\mathcal{N}(G,K_{s,s})\le \mathcal{N}(G^*,K_{s,s})\le max\left\{f_{s,s}(1),f_{s,s}\bigg(\frac{k}{2}-1\bigg)\right\}=\frac{k}{2}n-\frac{k}{2}+1.
$$
If $s\ge 2$, then
$$
\mathcal{N}(G,K_{s,s})\le \mathcal{N}(G^*,K_{s,s})\le max\left\{f_{s,s}(1),f_{s,s}\bigg(\frac{k}{2}-1\bigg)\right\}=\dbinom{\frac{k}{2}-1}{s}\dbinom{n}{s}.
$$
For $s\neq t$, since $f_{s,t}(x)+f_{t,s}(x)$ is also a convex function, we have
\begin{align}
\nonumber\mathcal{N}(G,K_{s,t})&\le \mathcal{N}(G^*,K_{s,t})\\
\nonumber &\le max\left\{f_{s,t}(1)+f_{t,s}(1),f_{s,t}\bigg(\frac{k}{2}-1\bigg)+f_{t,s}\bigg(\frac{k}{2}-1\bigg)\right\}\\
\nonumber &=\bigg(\dbinom{\frac{k}{2}-1}{t}+\dbinom{1}{t}\bigg)\dbinom{n}{s}+\bigg(\dbinom{\frac{k}{2}-1}{s}+\dbinom{1}{s}\bigg)\dbinom{n}{t}\\
\nonumber &-\dbinom{\frac{k}{2}-1}{s}\dbinom{1}{t}-\dbinom{\frac{k}{2}-1}{t}\dbinom{1}{s}.
\end{align}
If $s=1$ and $t\ge 2$, then
$$
\mathcal{N}(G,K_{s,t})\le \mathcal{N}(G^*,K_{s,t})=\frac{k}{2}\dbinom{n}{t}+(n-1)\dbinom{\frac{k}{2}-1}{t}.
$$
If $s\ge 2$ and $t=1$, then
$$
\mathcal{N}(G,K_{s,t})\le \mathcal{N}(G^*,K_{s,t})=\frac{k}{2}\dbinom{n}{s}+(n-1)\dbinom{\frac{k}{2}-1}{s}.
$$
If $s,t\ge 2$, then
$$
\mathcal{N}(G,K_{s,t})\le \mathcal{N}(G^*,K_{s,t})=\dbinom{\frac{k}{2}-1}{t}\dbinom{n}{s}+\dbinom{\frac{k}{2}-1}{s}\dbinom{n}{t}.
$$
Thus, the theorem holds.
\end{proof}


\begin{thebibliography}{99}

\bibitem{Alon} N. Alon, C. Shikhelman, Many $T$ copies in $H$-free graphs, J. Combin. Theory Ser. B 121 (2016) 146--172.

\bibitem{E-G} P. Erd\H{o}s, T. Gallai, On maximal paths and circuits of graphs, Acta Math. Acad. Sci. Hungar. 10 (1959) 337--356.

\bibitem{G1} D. Gerbner, E. Gy\H{o}ri, A. Methuku, M. Vizer, Generalized Tur\'{a}n numbers for even cycles, Acta Math. Univ. Comenian. 88 (2019) 723-728.

\bibitem{G2} D. Gerbner, E. Gy\H{o}ri, A. Methuku, M. Vizer, Generalized Tur\'{a}n problems for even cycles, J. Combin. Theory Ser. B 145 (2020) 169--213.

\bibitem{G3} D. Gerbner, A. Methuku, M. Vizer, Generalized Tur\'{a}n problems for disjoint copies of graphs, Discrete Math. 342 (2019) 3130--3141.

\bibitem{G4} D. Gerbner, C. Palmer, Counting copies of a fixed subgraph in $F$-free graphs, European J. Combin. 82 (2019)\\
http://dx.doi.org/10.1016/j.ejc.2019.103001.

\bibitem{G5} D. Gerbner, C. Palmer, Some exact results for generalized Tur\'{a}n numbers, 2020, arXiv:2006.03756v1.

\bibitem{G6} L. Gishboliner, A. Shapira, A generalized Tur\'{a}n problem and its applications, Int. Math. Res. Not. (2018)\\
http://dx.doi.org/10.1093/imrn/rny108.

\bibitem{G7} E. Gy\H{o}ri, H. Li, The maximum number of triangles in $C_{2k+1}$-free graph, Combin. Probab. Comput. 21 (2012) 187--191.

\bibitem{Kelmans} A. K. Kelmans, On graphs with randomly deleted edges, Acta. Math. Acad. Sci. Hung. 37 (1981) 77--88.

\bibitem{Lovasz} L. Lov\'{a}sz, Combinatorial Probelms and Exercises, Akad\'{e}miai Kiad\'{o}/North-Holland, Budapest/Amsterdam. 1979.

\bibitem{Luo} R. Luo, The maximum number of cliques in graphs without long cycles, J. Combin. Theory Ser. B 128 (2018) 219--226.

\bibitem{M-Q} J. Ma, Y. Qiu, Some sharp results on the generalized Tur\'{a}n numbers, European J. Combin. 84 (2020)\\
http://dx.doi.org/10.1016/j.ejc.2019.103026.

\bibitem{N-P} B. Ning, X. Peng, Extensions of the Erd\H{o}s-Gallai theorem and Luo's theorem, Combin. Probab. Comput. 29 (2020) 128--136.

\bibitem{N-W} B. Ning, J. Wang, The formula for Tur\'{a}n number of spanning linear forests, Discrete Math. 343 (2020)\\
https://doi.org/10.1016/j.disc.2020.111924.

\bibitem{wang} J. Wang, The shifting method and generalized Tur\'{a}n number of matchings, European J. Combin. 85 (2020)\\
https://doi.org/10.1016/j.ejc.2019.103057.

\bibitem{zy} A.A. Zykov, On some properties of linear complexes, Mat. Sb. 66 (1949) 163--188.
\end{thebibliography}
\end{document}